\documentclass[11pt]{article}
\usepackage{amssymb}
\usepackage{amsmath}
\usepackage{amsthm}
\usepackage{graphicx}
\usepackage[margin=0.5in]{geometry}
\usepackage{bm}
\usepackage{amsthm}
\usepackage{amscd}
\usepackage{amsbsy}
\usepackage{amsmath}
\usepackage{amsfonts}
\usepackage{amssymb}
\usepackage{calligra}
\usepackage{xcolor}
\usepackage{float}
\usepackage[T1]{fontenc}
\usepackage{multirow}
\usepackage{mathrsfs}
\usepackage{mathtools}
\usepackage{graphicx}
\usepackage{epstopdf}
\usepackage{subfigure}
\usepackage{booktabs}
\usepackage{listings}
\usepackage{longtable}
\usepackage{latexsym}
\usepackage{arydshln}
\usepackage{enumerate}
\usepackage{epsfig}
\usepackage{cite}
\usepackage{verbatim}
\usepackage{overpic}
\usepackage{abstract}
\allowdisplaybreaks[4]

\def\hs{\hspace{0.3cm}}

\newtheorem {theorem*}{Theorem}
\newtheorem {theorem} {Theorem}

\newtheorem{lemma}{Lemma}
\newtheorem{corollary}{Corollary}

\numberwithin{equation}{section}
\numberwithin{lemma}{section}
\numberwithin{theorem}{section}
\numberwithin{proposition}{section}
\numberwithin{corollary}{section}

\usepackage{diagbox}
\usepackage[colorlinks,
            CJKbookmarks=true,
            linkcolor=blue,
            anchorcolor=red,
             urlcolor=blue,
            citecolor=blue
            ]{hyperref}

\begin{document}
\arraycolsep=1pt

\title{\Large\bf  Products of Toeplitz and Hankel Operators
on Fock-Sobolev Spaces
\footnotetext{\hspace{-0.35cm}
\endgraf{\it E-mail: yiyuanzhang@e.gzhu.edu.cn (Yiyuan Zhang)
}
\endgraf \hspace{1.1cm} {\it guangfucao@163.com (Guangfu Cao)
}
\endgraf \hspace{1.1cm} {\it helichangsha1986@163.com (Li He)
}
\endgraf This work was partially supported by the National Natural Science Foundation of China (12071155, 11871170).
The first author was partially supported by the Innovation Research for the Postgraduates of Guangzhou University (2020GDJC-D08).
}
}
\author{Yiyuan Zhang, Guangfu Cao, Li He\thanks{Corresponding author}\\
\small\em School of Mathematics and Information Science,
Guangzhou University,\\ \small \em  Guangzhou,  Guangdong 510006, China
}
\date{ }
\maketitle

\vspace{-0.8cm}

\begin{center}
\begin{minipage}{16cm}\small
{\noindent{\bf Abstract} \quad In this paper, we investigate the boundedness of  Toeplitz product  $T_{f}T_{g}$ and Hankel product  $H_{f}^{*} H_{g}$ on Fock-Sobolev space for two polynomials $f$ and $g$ in $z,\overline{z}\in\mathbb{C}^{n}$. As a result, the boundedness of Toeplitz operator $T_{f}$ and Hankel operator $H_{f}$ with the polynomial symbol $f$ in $z,\overline{z}\in\mathbb{C}^{n}$ is characterized.

\endgraf{\bf Mathematics Subject Classification (2020).}\quad Primary 47B35; Secondary 30H20.
\endgraf{\bf Keywords.}\quad Toeplitz product, Hankel product, Fock-Sobolev space.
}
\end{minipage}
\end{center}

\section{Introduction}
\ \ \ \  Let $\mathbb{C}^{n}$ be the Euclidean space of complex dimension $n$ and $dv$ be the Lebesgue measure on $\mathbb{C}^{n}$. For $z=\left(z_{1}, \ldots, z_{n}\right)$ and  $w=\left(w_{1}, \ldots, w_{n}\right)$ in $\mathbb{C}^{n}$,
we denote
$$
z \cdot \overline{w}=\sum_{j=1}^{n} z_{j} \overline{w}_{j}, \quad|z|=(z \cdot \bar{z})^{1 / 2}.
$$

The Fock space $F^{2}$ consists of all entire functions $f$ on $\mathbb{C}^{n}$
such that
$$
\|f\|_{2}=\left(\frac{1}{\pi^{n}} \int_{\mathbb{C}^{n}}|f(z)|^{2} \mathrm{e}^{-|z|^{2}} d v(z)\right)^{\frac{1}{2}}<\infty.
$$

Let $\mathbb{N}$ be the set of nonnegative integers. For any multi-index $\alpha=(\alpha_{1},\ldots,\alpha_{n})\in \mathbb{N}^{n}$ and $z\in \mathbb{C}^{n}$, we write
$$|\alpha|=\alpha_{1}+\cdots+\alpha_{n}, \quad \alpha !=\alpha_{1} ! \cdots \alpha_{n} !, \quad \partial^{\alpha}=\partial_{1}^{\alpha_{1}} \cdots \partial_{n}^{\alpha_{n}},\quad z^{\alpha}=z_{1}^{\alpha_{1}}\cdots z_{n}^{\alpha_{n}},$$
where $\partial_{j}$ denotes the partial derivative with respect to the $z_{j}$.

For any $m\in\mathbb{N}$ , the  Fock-Sobolev space $F^{2,m}$ consists of
all entire  functions $f$ on $\mathbb{C}^{n}$ such that
$$\|f\|_{2,m}:=\sum_{|\alpha| \leq m}\|\partial^{\alpha}f\|_{2}<\infty.$$

The Fock-Sobolev space was introduced by Cho and Zhu in \cite{ChoZhu2012}, where they proved that $f\in F^{2,m}$ if and only if the function $z^{\alpha}f (z)$ is in $F^{2}$ for all multi-indexes $\alpha$ with $|\alpha| = m$, which allows us to introduce the equivalent norm on $F^{2,m}$:
$$
\|f\|_{2, m}=\left(\omega_{n, m} \int_{\mathbb{C}^{n}} \left| f(z) \right|^{2} \left|z\right|^{2m}e^{-|z|^{2}}d v(z)\right)^{\frac{1}{2}},
$$
where
$$
\omega_{n, m}=\frac{(n-1) !}{\pi^{n} \Gamma(m+n)}
$$
is a normalizing constant such that the constant function 1 has norm 1 in
$F^{2,m}$.


For any $z\in \mathbb{C}^{n}$,
Let
$$dV_{m}(z):=\omega_{n,  m}|z|^{2m} e^{-|z|^{2}}dv(z).$$

Denote $L^{2}_{m}$ by the space of Lebesgue measurable functions $f$ on $\mathbb{C}^{n}$ so that the function $ f(z)\in L^{2}(\mathbb{C}^{n}, dV_{m})$. It is well-known that the space $L^{2}_{m}$ is a Hilbert space with the inner product
$$
\langle f, g\rangle_{m}=\int_{\mathbb{C}^{n}} f(z) \overline{g(z)} d V_{m}(z).
$$

It is clear that the Fock-Sobolev space $F^{2,m}$ is a closed subspace of $L^{2}_{m}$. Let $P_{m}$ be the orthogonal projection from $L^{2}_{m}$ to $F^{2,m}$, that is
$$
P_{m} f(z)= \int_{\mathbb{C}^{n}} f(w) K_{m}(z, w)d V_{m}(z),
$$
where $K_{m}(z,w)$ is the reproducing kernel of  $F^{2,m}$.

For a Lebesgue measurable function $f$ on $\mathbb{C}^{n}$ such that $fK_{m}(z,\cdot)$ are in  $L^{2}(\mathbb{C}^{n},dV_{m})$ for all $z\in\mathbb{C}^{n}$,
the Toeplitz operator with symbol $f$ on $F^{2,m}$ is defined by $$T_{f} g=P_{m}(fg),$$
and the Hankel operator $H_{f}$ with symbol $f$ is given by
$$H_{f} g=\left(I-P_{m}\right)(fg),$$
where $I$ is the identity operator on $L^{2}_{m}$.



The original Toeplitz product problem was raised by Sarason in \cite{Sarason1994}, to ask whether one can  give a characterization for the pairs of outer functions $g,h$ in  the Hardy space $H^{2}$ such that the operator $T_{g}T_{\overline{h}}$ is bounded on $H^{2}$.
The famous Sarason's conjecture on this problem has attracted the attention of some mathematical researchers in operator theory. This problem was partially solved on the Hardy space of the unit circle in \cite{Zheng1996}, on the Bergman space of the unit disk in \cite{StroethoffZheng1999}, on the Bergman space of the polydisk in \cite{StroethoffZheng2003} and on the Bergman space of the unit ball in \cite{Park2006,StroethoffZheng2007}. Unfortunately, Sarason's conjecture was eventually proved to be false, both on the Hardy space and the Bergman space, see \cite{AlemanPottReguera2017,Nazarov1997} for counterexamples. However, in \cite{ChoParkZhu2014,BommierYoussfiZhu2017}, the Sarason's conjecture was  proved to be true on the Fock space, and in this setting, the explicit forms of the symbols $f$ and $g$ were given. Although the boundedness of a single Toeplitz operator on Fock space is still an open problem, some progress has been made in
Toeplitz products and Hankel products. Ma, Yan, Zheng and Zhu \cite{MaYanZhengZhu2019} gave a sufficient but not necessary condition on bounded Hankel product $H_{\overline{f}}^{*} H_{\overline{g}}$ for $f,g$ in the Fock space.
Yan and Zheng \cite{YanZheng2020} characterized bounded Toeplitz product $T_{f}T_{g}$ and Hankel product $H_{f}^{*} H_{g}$ on Fock space for two polynomials $f$ and $g$ in $z$, $\overline{z}\in\mathbb{C}$.
Inspired by these work, we study the boundedness of Toeplitz product $T_{f}T_{g}$ and Hankel product $H_{f}^{*} H_{g}$ on $F^{2,m}$ for two polynomials $f, g\in\mathcal{P}$, where
\begin{align*}
\mathcal{P}:=\left\{\prod_{s=1}^{n}\left(\sum_{\beta_{s}\leq k_{s}}\sum_{\gamma_{s}\leq l_{s}}a_{\beta_{s}\gamma_{s},s}z_{s}^{\beta_{s}}\overline{z}_{s}^{\gamma_{s}}\right):k_{s},l_{s}\in\mathbb{N},\ z_{s}\in\mathbb{C}\ \text{and}\ a_{\beta_{s}\gamma_{s},s}\ \text{are}\ \text{constants}\right\}.
\end{align*}

Our main results can be stated as follows.
\begin{theorem}\label{TP}
Let $f$ and $g$ be two polynomials in $z,\overline{z}\in\mathbb{C}^{n}$. Then the Toeplitz product $T_{f}T_{g}$ is bounded on $F^{2,m}$ if and only if both $f$ and $g$ are constants.
\end{theorem}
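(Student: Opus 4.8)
The forward implication is immediate: if $f\equiv c_{1}$ and $g\equiv c_{2}$ are constants, then $T_{f}=c_{1}I$ and $T_{g}=c_{2}I$ on $F^{2,m}$, so $T_{f}T_{g}=c_{1}c_{2}I$ is bounded. The substance is the converse, which I would establish in contrapositive form: if $f$ or $g$ is non-constant, then $T_{f}T_{g}$ is unbounded. I would test the operator on the monomials $z^{\alpha}$. Since $dV_{m}$ is invariant under the coordinate rotations $z_{j}\mapsto e^{i\theta_{j}}z_{j}$, the family $\{z^{\alpha}\}$ is an orthogonal basis of $F^{2,m}$, and a direct computation with the equivalent norm gives $\|z^{\alpha}\|_{2,m}^{2}=\omega_{n,m}\,\pi^{n}\sum_{|\eta|=m}\tfrac{m!}{\eta!}(\alpha+\eta)!$, whence the asymptotics $\|z^{\alpha}\|_{2,m}^{2}\sim \omega_{n,m}\,\pi^{n}\,\alpha!\,|\alpha|^{m}$ as $|\alpha|\to\infty$. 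The same rotation invariance yields the exact action $T_{z^{\gamma}\bar z^{\delta}}z^{\alpha}=\dfrac{\|z^{\alpha+\gamma}\|_{2,m}^{2}}{\|z^{\alpha+\gamma-\delta}\|_{2,m}^{2}}\,z^{\alpha+\gamma-\delta}$ when $\alpha+\gamma\geq\delta$ componentwise (and $0$ otherwise), with coefficient asymptotic to $\alpha^{\delta}$. In other words, $T_{z^{\gamma}\bar z^{\delta}}$ acts on monomials, to leading order in $\alpha$, like the normally ordered operator $z^{\gamma}\partial^{\delta}$.

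Writing $f=\sum a_{\gamma'\delta'}z^{\gamma'}\bar z^{\delta'}$ and $g=\sum b_{\gamma\delta}z^{\gamma}\bar z^{\delta}$, the previous step shows that $T_{f}$ and $T_{g}$ act, to leading order, like the differential operators obtained from $f$ and $g$ by the substitution $\bar z\mapsto\partial$, with all multiplications placed to the left of all derivatives. Composing these and using that each commutator $[\partial^{\delta'},z^{\gamma}]$ strictly lowers the total order, the top-degree part (degree $d:=\deg f+\deg g$ in the combined variables) of the leading action of $T_{f}T_{g}$ is exactly the product $f_{\mathrm{top}}(z,\zeta)\,g_{\mathrm{top}}(z,\zeta)$ of the top homogeneous parts, with $\zeta$ standing for $\partial$. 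This is the decisive point for ruling out cancellation: the polynomial ring in $(z,\zeta)$ is an integral domain, so whenever $f,g$ are not both constant, $f_{\mathrm{top}}$ and $g_{\mathrm{top}}$ are nonzero and $f_{\mathrm{top}}g_{\mathrm{top}}\neq 0$, of degree $d\geq 1$.

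To turn a nonzero top symbol into a genuine norm blow-up, I would fix a monomial $z^{S}\zeta^{D}$ occurring in $f_{\mathrm{top}}g_{\mathrm{top}}$ with nonzero coefficient $c$, and isolate in $T_{f}T_{g}z^{\alpha}$ the coefficient of the single output monomial $z^{\alpha+S-D}$. Collecting all contributions with this net shift $S-D$, that coefficient is a polynomial in $\alpha$; because no term of $T_{f}T_{g}$ has total degree exceeding $d$, only $|\delta|\leq|D|$ can occur at this shift, so its homogeneous part of degree $|D|$ is the leading part and is a nonzero polynomial (it contains $c\,\alpha^{D}$). Choosing $\rho\in\mathbb{N}^{n}$ off the zero set of this polynomial and setting $\alpha=N\rho$, the coefficient of $z^{\alpha+S-D}$ grows like $N^{|D|}$, while the norm ratio $\|z^{\alpha+S-D}\|_{2,m}^{2}/\|z^{\alpha}\|_{2,m}^{2}$ grows like $N^{|S|-|D|}$ by the asymptotics above. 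Keeping only this one output monomial in $\|T_{f}T_{g}z^{\alpha}\|_{2,m}^{2}$ gives
$$\frac{\|T_{f}T_{g}z^{\alpha}\|_{2,m}^{2}}{\|z^{\alpha}\|_{2,m}^{2}}\ \gtrsim\ N^{2|D|}\cdot N^{|S|-|D|}=N^{|S|+|D|}=N^{d}\ \longrightarrow\ \infty,$$
so $T_{f}T_{g}$ is unbounded, which is the desired contrapositive.

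The \emph{main obstacle}, and the step requiring the most care, is exactly this cancellation problem: a priori the leading coefficients produced by the many monomials of $f$ and $g$ could cancel for every output monomial. The two ingredients that defeat this are (i) the integral-domain identity $f_{\mathrm{top}}g_{\mathrm{top}}\neq 0$, which guarantees survival of the top part after all commutator corrections, and (ii) the choice of a generic growth ray $\alpha=N\rho$, which resolves the distinct multi-indices $\alpha^{\delta}$ of equal length $|\delta|$ that would otherwise collapse to the same power of $N$. A secondary technical point is to verify that the exact coefficients $\|z^{\alpha+\gamma}\|_{2,m}^{2}/\|z^{\alpha+\gamma-\delta}\|_{2,m}^{2}$ differ from $\alpha^{\delta}$ only by a factor $1+o(1)$ along the ray, so that the subleading corrections cannot interfere with the $N^{d}$ growth; this follows at once from the monomial-norm asymptotics recorded above.
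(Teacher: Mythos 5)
Your proposal is correct, and although it shares the paper's overall skeleton --- testing $T_{f}T_{g}$ on the monomial basis, computing the exact action of $T_{z^{\gamma}\bar z^{\delta}}$ on $z^{\alpha}$ as a ratio of monomial norms (your formula is exactly Lemma \ref{ToeL1} in unnormalized form), and extracting power growth in $\alpha$ from factorial asymptotics --- the decisive non-cancellation step is handled by a genuinely different mechanism. The paper groups the monomials of $f$ and $g$ into classes $F_{\theta_{s}}$, $G_{\tau_{t}}$ according to the net index shift they induce, uses orthogonality of the differently shifted outputs to isolate the extremal shift classes, and then relies on the product structure of the class $\mathcal{P}$ so that within each extremal class a single bidegree dominates (Lemma \ref{ToeL2}). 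You instead fix one output shift $S-D$ arising from a monomial of $f_{\mathrm{top}}g_{\mathrm{top}}$, observe from $|\gamma+\gamma'|+|\delta+\delta'|\le d$ together with $|\gamma+\gamma'|-|\delta+\delta'|=|S|-|D|$ that the degree-$|D|$ homogeneous part of that output coefficient is contributed only by top-degree pairs and contains $c\,\alpha^{D}$ with $c\neq 0$, and then defeat cancellation by the integral-domain identity $f_{\mathrm{top}}g_{\mathrm{top}}\neq 0$ together with a generic ray $\alpha=N\rho$ that separates the monomials $\alpha^{E}$ with $|E|=|D|$. Your route buys real generality: it treats arbitrary polynomials in $z,\bar z\in\mathbb{C}^{n}$, whereas the paper's argument is written for the product class $\mathcal{P}$ (a general polynomial such as $z_{1}\bar z_{2}+z_{2}\bar z_{1}$ does not factor that way, so strictly speaking your argument covers more than the paper's proof does); the paper's route avoids the generic-ray and homogeneous-part bookkeeping at the price of that restriction. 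In a full write-up you should only add that $\rho$ may be chosen in $\mathbb{N}_{>0}^{n}$ because a nonzero polynomial cannot vanish on that Zariski-dense set, and that the finitely many $1+o(1)$ factors in the coefficient asymptotics are uniform along the ray --- both routine.
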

\begin{theorem}\label{HP}
Let $f$ and $g$ be two polynomials in $z,\overline{z}\in\mathbb{C}^{n}$. Then the Hankel product $H^{*}_{f}H_{g}$ is bounded on $F^{2,m}$ if and only if at least one of the following statements holds:

$(1)$ $f$ is holomorphic.

$(2)$ $g$ is holomorphic.

$(3)$ $n = 1$ and there exist two holomorphic polynomials $f_{1}$ and $g_{1}$ such that
\begin{align*}
f=f_{1}+a\overline{z},\quad g=g_{1}+b\overline{z},
\end{align*}
where $a,b$ are constants and $z,\overline{z}\in\mathbb{C}$.
\end{theorem}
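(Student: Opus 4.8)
The plan is to base everything on the operator identity
\begin{equation*}
H_f^{*}H_g = T_{\overline{f}g} - T_{\overline{f}}T_g,
\end{equation*}
valid on the dense subspace of polynomials and obtained by inserting $I=P_m+(I-P_m)$ into $\langle H_g h, H_f k\rangle=\langle (I-P_m)(gh),(I-P_m)(fk)\rangle$ and using $T_f^{*}=T_{\overline f}$. Since a holomorphic polynomial $\varphi$ satisfies $H_\varphi h=(I-P_m)(\varphi h)=0$ for every polynomial $h$, each Hankel operator depends only on the anti-holomorphic part of its symbol: writing $f=f_{\mathrm h}+\sum_{\beta\neq 0}c_\beta(z)\,\overline{z}^{\,\beta}$ with each $c_\beta$ holomorphic, we get $H_f=H_{f-f_{\mathrm h}}$. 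In particular, if $f$ or $g$ is holomorphic then $H_f^{*}H_g=0$, which settles the sufficiency of $(1)$ and $(2)$. Note that Theorem \ref{TP} shows $T_{\overline f}T_g$ is generically unbounded, so the boundedness of $H_f^{*}H_g$ is a statement about cancellation in the above difference; I would make this cancellation explicit by passing to the monomial basis, which is orthogonal because $\langle z^\gamma,z^\delta\rangle_m=0$ for $\gamma\neq\delta$.

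For the sufficiency of $(3)$ the first reduction gives $H_f^{*}H_g=\overline a\,b\,H_{\overline z}^{*}H_{\overline z}$ when $n=1$, $f=f_1+a\overline z$, $g=g_1+b\overline z$. A short computation yields $P_m(\overline z\,z^k)=(k+m)z^{k-1}$, hence $H_{\overline z}z^k=\overline z\,z^k-(k+m)z^{k-1}$; since both terms pair with monomials in a single index, $H_{\overline z}^{*}H_{\overline z}$ is diagonal in the monomial basis with
\begin{equation*}
\frac{\|H_{\overline z}z^k\|_{2,m}^2}{\|z^k\|_{2,m}^2}=\frac{\|z^{k+1}\|_{2,m}^2-(k+m)\|z^k\|_{2,m}^2}{\|z^k\|_{2,m}^2}=1,
\end{equation*}
using $\|z^k\|_{2,m}^2=(k+m)!/m!$. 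Thus $H_{\overline z}^{*}H_{\overline z}=I$, proving sufficiency of $(3)$.

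The necessity direction is the substantial part. Assuming neither $f$ nor $g$ is holomorphic, I would compute the matrix of $H_f^{*}H_g$ through $\langle H_f^{*}H_gz^\gamma,z^{\gamma'}\rangle=\langle H_gz^\gamma,H_fz^{\gamma'}\rangle$, where every $H_{z^\alpha\overline z^{\,\beta}}z^\gamma=z^{\alpha+\gamma}\overline z^{\,\beta}-P_m(z^{\alpha+\gamma}\overline z^{\,\beta})$ is explicit: angular integration forces $P_m(z^\mu\overline z^{\,\beta})$ to be the \emph{single} monomial $c\,z^{\mu-\beta}$ (when $\mu\geq\beta$) and $0$ otherwise, in every dimension. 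Two model computations then drive the reductions. First, an anti-holomorphic degree $\geq 2$ already makes $H_f^{*}H_f$ unbounded: in one variable $\|H_{\overline z^{\,2}}z^k\|_{2,m}^2/\|z^k\|_{2,m}^2=4(k+m)+2\to\infty$, and the growth of the corresponding projection coefficients $c$ reproduces this for every monomial of anti-holomorphic degree at least two, so both anti-holomorphic parts must be linear in $\overline z$. Second, a linear term with a non-constant coefficient is unbounded, as $\|H_{z\overline z}z^k\|_{2,m}^2/\|z^k\|_{2,m}^2=k+1+m\to\infty$ shows, forcing the coefficients to be constants. What remains, for $n\geq 2$, is to rule out the survival of linear constant-coefficient anti-holomorphic terms in more than one variable, and it is exactly here that the dimension hypothesis must be extracted.

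This last step is the hard part, and its difficulty stems from the fact that the Fock--Sobolev weight $|z|^{2m}=(|z_1|^2+\cdots+|z_n|^2)^m$ does not factor across the variables. Consequently $\|z^\gamma\|_{2,m}^2$ is not a product but the coupled multinomial sum
\begin{equation*}
\|z^\gamma\|_{2,m}^2=\frac{(n-1)!}{(m+n-1)!}\sum_{|\mu|=m}\frac{m!}{\mu!}\prod_{j=1}^{n}(\gamma_j+\mu_j)!,
\end{equation*}
and the projection coefficients, the cross pairings $\langle\overline z_i\,z^\gamma,\overline z_j\,z^{\gamma'}\rangle_m$, and the resulting off-diagonal weights all inherit this coupling. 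I would therefore isolate the cross terms $H_{\overline z_i}^{*}H_{\overline z_j}$ with $i\neq j$, which act as weighted shifts $z^\gamma\mapsto z^{\gamma+e_i-e_j}$, compute their weights by controlling the asymptotics of the above multinomial sum as $|\gamma|\to\infty$, and determine precisely when two distinct anti-holomorphic variables are compatible with boundedness; combined with the two reductions this is meant to confine the non-holomorphic bounded case to $n=1$. I expect the coupled-weight asymptotics, together with the passage from individual entry estimates to a conclusion about the \emph{full} operator rather than merely its diagonal, to be the main obstacle, whereas the degree and non-constant-coefficient reductions follow from the one-variable model computations above applied variable by variable, as in the Fock-space analysis of \cite{YanZheng2020}.
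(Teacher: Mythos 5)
Your overall architecture --- the identity $H_f^{*}H_g=T_{\overline f g}-T_{\overline f}T_g$, discarding the holomorphic parts of the symbols, the sufficiency computations for (1)--(3), and passing to matrix entries in the orthogonal monomial basis --- is the same as the paper's, and your one-variable model computations are correct. But the necessity direction has two genuine gaps. First, your degree reduction argues from the unboundedness of $H_f^{*}H_f$ (equivalently of $H_f$ itself), whereas the hypothesis only gives boundedness of $H_f^{*}H_g$; the estimate $|\langle H_ge_\alpha,H_fe_{\alpha'}\rangle|\le\|H_ge_\alpha\|\,\|H_fe_{\alpha'}\|$ goes the wrong way, so unboundedness of one factor says nothing about the product. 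What is needed is the joint asymptotics of the cross pairing: for monomial symbols $f=z^{\beta}\overline z^{\,\gamma}$, $g=z^{\mu}\overline z^{\,\nu}$ the paper's Lemma \ref{HanL1} shows the weight of $H_f^{*}H_g$ behaves like $\bigl(\sum_{j}\gamma_j\nu_j\alpha_j^{-1}\bigr)\alpha^{(\beta+\nu+\gamma+\mu)/2}$, so boundedness constrains the two anti-holomorphic degrees \emph{together}, not separately. You also need a device preventing cancellation among the many monomial terms of $f$ and $g$: the paper groups each symbol by the grading $\beta_s-\gamma_s$, notes that the different graded pieces of $H_f^{*}H_ge_\alpha$ land on mutually orthogonal basis vectors $e_{\alpha+\gamma-\beta}$, and thereby isolates an extremal product to which the monomial lemma applies. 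Nothing in your sketch plays this role.

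Second, and more seriously, the step you yourself identify as the crux --- excluding, for $n\ge2$, symbols whose anti-holomorphic parts are constant-coefficient linear terms in \emph{different} variables --- is left entirely as a plan, and the plan as described does not close. If you carry out the computation you propose for $H^{*}_{\overline z_i}H_{\overline z_j}$ with $i\ne j$, you find a weighted shift $e_\alpha\mapsto c_\alpha e_{\alpha+e_i-e_j}$ whose weights satisfy $|c_\alpha|\lesssim m\sqrt{\alpha_i\alpha_j}\,/\,|\alpha|^{2}\to0$ (and for $m=0$ the operator is identically zero), so these cross terms are bounded, indeed compact, and cannot by themselves force $n=1$. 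This is precisely the degenerate case in which the leading coefficient $\sum_j\gamma_j\nu_j\alpha_j^{-1}$ of the paper's asymptotic vanishes because $\gamma$ and $\nu$ have disjoint supports; note that the paper's own Lemma \ref{HanL2} glosses over it, since its condition $(a)$ is \emph{not} equivalent to its conditions $(1)$ or $(2)$ when the non-holomorphic factors of $f$ and of $g$ occupy disjoint sets of variables (e.g.\ $f=\overline z_1$, $g=\overline z_2$). So the missing step is not merely unfinished: the route you describe for it produces a bounded operator, and any correct completion would have to locate the obstruction to $n\ge2$ elsewhere.
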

We would like to mention that all the conclusions for the Fock-Sobolev space $F^{2,m}$ in this paper are consistent with the results in \cite{YanZheng2020} when $m = 0 $ and $n = 1$, but the boundedness characterization of Hankel product for $n\geq 2$ is essentially different from $n=1$ and all the results for $m\geq 1$ are new.

The layout of the paper is as follows. In Section \ref{section2} we give the proof of characterizations of bounded Toeplitz product $T_{f}T_{g}$  on $F^{2,m}$. In section \ref{section3} we give the proof of characterizations of bounded Hankel product $H^{*}_{f}H_{g}$.

In what follows, denote by $\chi_{E}$ the characteristic function of a measurable set $E$.  We say a multi-index $\alpha=(\alpha_{1},\ldots,\alpha_{n})\in \mathbb{N}^{n}$ tends to $\infty$ if each component $\alpha_{i}$ tends to $\infty$. For two arbitrary sequences $A_{\alpha}$ and $B_{\alpha}$ depending on multi-index $\alpha=(\alpha_{1},\ldots,\alpha_{n})$, we use the notation $A_{\alpha}\sim B_{\alpha}$ to denote the relationship:
\begin{align*}
\lim_{\alpha\rightarrow \infty}\frac{A_{\alpha}}{B_{\alpha}}=C,
\end{align*}
where $C$ is a positive constant independent of $\alpha$.

Recall the  Stirling's formula is stated as
$$
k ! \sim \sqrt{2 \pi k}\left(\frac{k}{e}\right)^{k},
$$
where  $k$ is a positive integer and \textquotedblleft$\sim$\textquotedblright\ can be understood in the sense that the ratio of the two sides tends to 1 as $k$ goes to $\infty$.

\section{Toeplitz Products}\label{section2}
\ \ \ \  In this section, we are going to characterize bounded Toeplitz product $T_{f}T_{g}$ with $f, g\in\mathcal{P}$. For $\alpha\in \mathbb{N}^{n}$ and $z\in \mathbb{C}^{n}$, the functions
$$
e_{\alpha}(z)=\sqrt{\frac{(m+n-1) !(n-1+|\alpha|) !}{\alpha !(n-1) ! (m+n-1+|\alpha|) !}} z^{\alpha}
$$
form an orthonormal basis for $F^{2,m}$, see \cite{ChoZhu2012} for more details.

Given $\alpha=(\alpha_{1},\ldots,\alpha_{n})$, $\beta=(\beta_{1},\ldots,\beta_{n})\in \mathbb{N}^{n}$, the addition and the subtraction of $\alpha$ and $\beta$ are defined by
$$
\alpha \pm \beta:=\left(\alpha_{1} \pm \beta_{1},\ldots, \alpha_{n} \pm \beta_{n}\right).
$$
We call  $\alpha \geq \beta$ (resp. $\alpha>\beta$, $\alpha\leq\beta$, $\alpha<\beta$) if $ \alpha_{i} \geq \beta_{i}$ (resp. $\alpha_{i}>\beta_{i}$, $\alpha_{i}\leq\beta_{i}$, $\alpha_{i}<\beta_{i}$) for each $i=1,\cdots,n$.

We now give a technical result that will be frequently used in the following.
\begin{lemma}\label{ToeL1}
Let $\left\{e_{\alpha}:\alpha\in \mathbb{N}^{n}\right\}$ be any orthonormal basis of $F^{2,m}$. Then for any $\beta$, $\gamma\in \mathbb{N}^{n}$ and $z\in\mathbb{C}^{n}$,
we have
\begin{align*}
T_{z^{\beta}\overline{z}^{\gamma}}e_{\alpha}
=\left\{
   \begin{array}{ll}
     \sqrt{\frac{\alpha !(n-1+|\alpha|) !(n-1+|\alpha+\beta-\gamma|) !}{(\alpha+\beta-\gamma) !(m+n-1+|\alpha|) !(m+n-1+|\alpha+\beta-\gamma|) !}}\frac{(\alpha+\beta)!(m+n-1+|\alpha+\beta|)!}
{\alpha !(n-1+|\alpha+\beta|)! }
e_{\alpha+\beta-\gamma}, \quad& \hbox{$\alpha+\beta-\gamma\geq0$,} \\
     0, & \hbox{\text{otherwise}.}
   \end{array}
 \right.
\end{align*}
\end{lemma}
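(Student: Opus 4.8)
The plan is to reduce the computation to the orthogonality of monomials under $dV_m$, exploiting that each $e_\alpha$ is a scalar multiple of the monomial $z^\alpha$. Write $e_\alpha(z)=c_\alpha z^\alpha$, where
$$c_\alpha=\sqrt{\frac{(m+n-1)!\,(n-1+|\alpha|)!}{\alpha!\,(n-1)!\,(m+n-1+|\alpha|)!}}$$
is the normalizing constant read off from the definition of the basis. Since $z^\beta\overline{z}^\gamma e_\alpha=c_\alpha z^{\alpha+\beta}\overline{z}^\gamma$ is a polynomial in $z,\overline{z}$, it lies in $L^2_m$ and its projection $T_{z^\beta\overline{z}^\gamma}e_\alpha=P_m\big(c_\alpha z^{\alpha+\beta}\overline{z}^\gamma\big)$ is a finite sum, so no convergence issue arises.

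First I would expand the projection against the orthonormal basis,
$$T_{z^\beta\overline{z}^\gamma}e_\alpha=\sum_{\delta\in\mathbb{N}^n}\langle z^\beta\overline{z}^\gamma e_\alpha,\,e_\delta\rangle_m\,e_\delta=c_\alpha\sum_{\delta\in\mathbb{N}^n}c_\delta\Big(\int_{\mathbb{C}^n}z^{\alpha+\beta}\overline{z}^{\,\gamma+\delta}\,dV_m(z)\Big)e_\delta.$$
Because $dV_m$ is rotation invariant, the angular integration forces each integral to vanish unless $\alpha+\beta=\gamma+\delta$, i.e. unless $\delta=\alpha+\beta-\gamma$. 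This index is an admissible element of $\mathbb{N}^n$ precisely when $\alpha+\beta-\gamma\geq 0$ componentwise; when this fails every coefficient vanishes and $T_{z^\beta\overline{z}^\gamma}e_\alpha=0$, which is the second branch of the formula.

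For the surviving index $\delta=\alpha+\beta-\gamma$ one has $\gamma+\delta=\alpha+\beta$, so the coefficient equals $c_\alpha\,c_{\alpha+\beta-\gamma}\,\|z^{\alpha+\beta}\|_m^2$. Rather than computing the Gaussian integral directly, I would use orthonormality of $\{e_\nu\}$: from $c_\nu^2\|z^\nu\|_m^2=\langle e_\nu,e_\nu\rangle_m=1$ we obtain $\|z^{\alpha+\beta}\|_m^2=c_{\alpha+\beta}^{-2}$ in closed form. Substituting the three explicit constants $c_\alpha$, $c_{\alpha+\beta-\gamma}$, and $c_{\alpha+\beta}$ then expresses the coefficient of $e_{\alpha+\beta-\gamma}$ as a product of factorials.

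The rest is algebraic bookkeeping, which is where I expect the only real friction. One must cancel the common factors $(m+n-1)!$ and $(n-1)!$ coming from the three normalizing constants and then reorganize the remaining factorials into the stated shape — in particular absorbing a factor $\sqrt{\alpha!}$ into the radical and splitting off the rational factor $\frac{(\alpha+\beta)!\,(m+n-1+|\alpha+\beta|)!}{\alpha!\,(n-1+|\alpha+\beta|)!}$. Matching the resulting expression against the claimed formula finishes the proof.
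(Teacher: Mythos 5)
Your proposal is correct and follows essentially the same route as the paper: write $T_{z^{\beta}\overline{z}^{\gamma}}e_{\alpha}=c_{\alpha}P_{m}(z^{\alpha+\beta}\overline{z}^{\gamma})$, expand the projection against the monomial basis, use orthogonality of distinct monomials under $dV_{m}$ to isolate the single surviving index $\delta=\alpha+\beta-\gamma$ (and conclude the operator gives $0$ when that index is not in $\mathbb{N}^{n}$), then evaluate the remaining coefficient. The only difference is a minor but pleasant shortcut: where the paper computes $\langle z^{\alpha+\beta},z^{\alpha+\beta}\rangle_{m}$ by integration in polar coordinates and a cited lemma of Zhu, you read it off as $c_{\alpha+\beta}^{-2}$ from the normalization $\|e_{\alpha+\beta}\|_{2,m}=1$, which yields the identical value and makes the final coefficient $c_{\alpha}c_{\alpha+\beta-\gamma}c_{\alpha+\beta}^{-2}$ match the stated formula.
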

\begin{proof}
Direct verifications give
\begin{align}\label{(ToeL11)}
T_{z^{\beta}\overline{z}^{\gamma}}e_{\alpha}
&=\sqrt{\frac{(m+n-1) !(n-1+|\alpha|) !}{\alpha !(n-1) ! (m+n-1+|\alpha|) !}}
P_{m}(z^{\alpha+\beta}\overline{z}^{\gamma})
\end{align}
and
\begin{align}\label{(ToeL12)}
P_{m}(z^{\alpha+\beta}\overline{z}^{\gamma})
\nonumber&=\sum_{\eta\in \mathbb{N}^{n}}\left\langle z^{\alpha+\beta}\overline{z}^{\gamma},e_{\eta}\right\rangle_{m}e_{\eta}\\
&=\sum_{\eta\in \mathbb{N}^{n}}\sqrt{\frac{(m+n-1) !(n-1+|\eta|) !}{\eta !(n-1) ! (m+n-1+|\eta|) !}}
\left\langle z^{\alpha+\beta},z^{\eta+\gamma}\right\rangle_{m}e_{\eta}.
\end{align}

For $\eta\neq\alpha+\beta-\gamma$, it is easy to see that
\begin{align}\label{(ToeL18)}
\left\langle z^{\alpha+\beta},z^{\eta+\gamma}\right\rangle_{m}=0.
\end{align}

For $\eta=\alpha+\beta-\gamma$,  applying integration in polar coordinates and using \cite[Lemma 1.11]{Zhu2007}, we obtain
\begin{align*}
\left\langle z^{\alpha+\beta},z^{\eta+\gamma}\right\rangle_{m}
=\frac{(\alpha+\beta)!(n-1)!(m+n-1+|\alpha+\beta|)!}
{(m+n-1)!(n-1+|\alpha+\beta|)!}.
\end{align*}
Notice that if $\alpha+\beta-\gamma\geq0$, then there exists a unique $\eta$ in \eqref{(ToeL12)} such that $\eta=\alpha+\beta-\gamma$. Thus
\begin{align*}
&\hs\hs P_{m}(z^{\alpha+\beta}\overline{z}^{\gamma})\\
&=\sqrt{\frac{(m+n-1) !(n-1+|\alpha+\beta-\gamma|) !}{(\alpha+\beta-\gamma) !(n-1) ! (m+n-1+|\alpha+\beta-\gamma|) !}}
\left\langle z^{\alpha+\beta},z^{\alpha+\beta}\right\rangle_{m}
e_{\alpha+\beta-\gamma}\\
&=\sqrt{\frac{(n-1) !(n-1+|\alpha+\beta-\gamma|) !}{ (\alpha+\beta-\gamma) !( m+n-1) !(m+n-1+|\alpha+\beta-\gamma|) !}}\frac{(\alpha+\beta)!(m+n-1+|\alpha+\beta|)!}
{(n-1+|\alpha+\beta|)! }
e_{\alpha+\beta-\gamma}.
\end{align*}
This together with \eqref{(ToeL11)} gives
\begin{align*}
&T_{z^{\beta}\overline{z}^{\gamma}}e_{\alpha}\\
=&\sqrt{\frac{\alpha !(n-1+|\alpha|) !(n-1+|\alpha+\beta-\gamma|) !}{(\alpha+\beta-\gamma) !(m+n-1+|\alpha|) !(m+n-1+|\alpha+\beta-\gamma|) !}}\frac{(\alpha+\beta)!(m+n-1+|\alpha+\beta|)!}
{\alpha !(n-1+|\alpha+\beta|)! }
e_{\alpha+\beta-\gamma}.
\end{align*}
If $\alpha+\beta-\gamma$ is not larger than 0 and not equal to 0, then
$\eta\neq\alpha+\beta-\gamma$ for all $\eta$ in \eqref{(ToeL12)}, it follows from \eqref{(ToeL11)}-\eqref{(ToeL18)} that
$T_{z^{\beta}\overline{z}^{\gamma}}e_{\alpha}=0.$
This completes the proof.
\end{proof}

In order to state the following lemma effectively,
for any function $f$, we define
\begin{align}
f^{(j)}:=\left\{
                  \begin{array}{ll}
                     f, &\quad \hbox{$j=0$,} \\
                       & \hbox{  } \\
                   \overline{f}, & \quad\hbox{$j=1$.}
                  \end{array}
                \right.
\end{align}

\begin{lemma}\label{ToeL2}
Suppose $\beta=(\beta_{1},\cdots,\beta_{n})$, $\gamma=(\gamma_{1},\cdots,\gamma_{n})$, $k=(k_{1},\cdots,k_{n})$ and
$l=(l_{1},\cdots,l_{n})$ are in $\mathbb{N}^{n}$. For any $z=(z_{1},\cdots,z_{n})\in\mathbb{C}^{n}$, let
\begin{align*}
f_{\beta_{i}}(z_{i})=\sum_{\mu_{i}\leq k_{i}}
a_{\mu_{i}}z_{i}^{\mu_{i}+\beta_{i}}\overline{z}_{i}^{\mu_{i}},\quad
g_{\gamma_{i}}(z_{i})=\sum_{\nu_{i}\leq l_{i}}
b_{\nu_{i}}z_{i}^{\nu_{i}+\gamma_{i}}\overline{z}_{i}^{\nu_{i}},
\end{align*}
where $a_{\mu_{i}}$, $b_{\nu_{i}}$ are constants with $a_{k_{i}}$, $b_{l_{i}}$ nonzero  for each $i=1,\cdots,n$.
For  $i_{1},\cdots,i_{n}$, $j_{1},\cdots,j_{n} \in \left\{0, 1\right\}$, let
\begin{align*}
f_{\beta}(z)=f^{(i_{1})}_{\beta_{1}}(z_{1})\cdots f^{(i_{n})}_{\beta_{n}}(z_{n}),\quad
g_{\gamma}(z)=g^{(j_{1})}_{\gamma_{1}}(z_{1})\cdots g^{(j_{n})}_{\gamma_{n}}(z_{n}).
\end{align*}
Then each of the Toeplitz products
$T_{f_{\beta}}T_{g_{\gamma}}$
is bounded on $F^{2,m}$ if and only if $\beta=\gamma=k = l =(0,\cdots,0)$.
\end{lemma}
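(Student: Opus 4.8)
The plan is to test the operator $T_{f_{\beta}}T_{g_{\gamma}}$ against the orthonormal basis $\{e_{\alpha}\}$ and read off boundedness from the growth of $\|T_{f_{\beta}}T_{g_{\gamma}}e_{\alpha}\|$ as $\alpha\to\infty$. The sufficiency is immediate: if $\beta=\gamma=k=l=0$ then $f_{\beta}$ and $g_{\gamma}$ reduce to the constants $a_{0}$ and $b_{0}$, so $T_{f_{\beta}}T_{g_{\gamma}}$ is a scalar multiple of the identity and hence bounded.

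For necessity, the first key observation is that $T_{g_{\gamma}}$ sends each $e_{\alpha}$ to a scalar multiple of a single basis vector. Indeed, expanding $g_{\gamma}=\prod_{s}g^{(j_{s})}_{\gamma_{s}}(z_{s})$ into monomials $z^{P}\overline{z}^{Q}$, every monomial coming from the $s$-th factor satisfies $P_{s}-Q_{s}=(-1)^{j_{s}}\gamma_{s}=:\delta^{g}_{s}$, independently of the summation index $\nu_{s}$; hence every monomial of $g_{\gamma}$ carries the same net shift $\delta^{g}=((-1)^{j_{1}}\gamma_{1},\ldots,(-1)^{j_{n}}\gamma_{n})$. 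Applying Lemma \ref{ToeL1} term by term, for all $\alpha$ with $\alpha+\delta^{g}\geq 0$ we obtain $T_{g_{\gamma}}e_{\alpha}=\lambda_{g}(\alpha)\,e_{\alpha+\delta^{g}}$, where $\lambda_{g}(\alpha)$ is the finite sum of the coefficients produced by Lemma \ref{ToeL1}; likewise $T_{f_{\beta}}e_{\alpha'}=\lambda_{f}(\alpha')\,e_{\alpha'+\delta^{f}}$ with $\delta^{f}_{s}=(-1)^{i_{s}}\beta_{s}$. Composing, for all large $\alpha$,
\begin{align*}
T_{f_{\beta}}T_{g_{\gamma}}e_{\alpha}=\lambda_{g}(\alpha)\,\lambda_{f}(\alpha+\delta^{g})\,e_{\alpha+\delta^{g}+\delta^{f}},
\end{align*}
so boundedness of $T_{f_{\beta}}T_{g_{\gamma}}$ forces $|\lambda_{g}(\alpha)\lambda_{f}(\alpha+\delta^{g})|$ to remain bounded as $\alpha\to\infty$.

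The technical heart is then the asymptotics of $\lambda_{g}$ and $\lambda_{f}$ via Stirling's formula. I would split the coefficient of Lemma \ref{ToeL1} as $C=A(\alpha)B(\alpha,P)$, where $A(\alpha)$ gathers the factors depending only on $\alpha$ and on the common shift $\delta^{g}$, while $B(\alpha,P)$ carries the dependence on the monomial degree $P$. A Stirling estimate gives $A(\alpha)\sim\alpha^{-\delta^{g}/2}|\alpha|^{-m}$ and $B(\alpha,P)\sim\alpha^{P}|\alpha|^{m}$, in the multi-index notation $\alpha^{P}=\prod_{s}\alpha_{s}^{P_{s}}$, so that each summand behaves like $\alpha^{P-\delta^{g}/2}$; crucially the weight-induced factors $|\alpha|^{\pm m}$ cancel. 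Since the summand with $\nu=(l_{1},\ldots,l_{n})$ has strictly the largest power of $\alpha_{s}$ in every coordinate and its coefficient $\prod_{s}b^{(j_{s})}_{l_{s}}$ is nonzero (because each $b_{l_{s}}\neq 0$), it dominates the finite sum, yielding $\lambda_{g}(\alpha)\sim c_{g}\prod_{s}\alpha_{s}^{l_{s}+\gamma_{s}/2}$ with $c_{g}\neq 0$; the exponent $l_{s}+\gamma_{s}/2$ comes out the same whether $j_{s}=0$ or $j_{s}=1$. The symmetric computation gives $\lambda_{f}(\alpha+\delta^{g})\sim c_{f}\prod_{s}\alpha_{s}^{k_{s}+\beta_{s}/2}$.

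Combining the two estimates, $\|T_{f_{\beta}}T_{g_{\gamma}}e_{\alpha}\|=|\lambda_{g}(\alpha)\lambda_{f}(\alpha+\delta^{g})|\sim|c_{f}c_{g}|\prod_{s}\alpha_{s}^{(k_{s}+l_{s})+(\beta_{s}+\gamma_{s})/2}$. As $\alpha\to\infty$ this stays bounded if and only if every exponent vanishes; since $k_{s},l_{s},\beta_{s},\gamma_{s}\geq 0$, this forces $k_{s}=l_{s}=\beta_{s}=\gamma_{s}=0$ for every $s$, that is $\beta=\gamma=k=l=(0,\ldots,0)$, which closes the necessity direction. I expect the main obstacle to be the bookkeeping in the Stirling asymptotics — specifically, verifying that the leading monomial dominates the finite sum with no cancellation and that the $|\alpha|^{\pm m}$ factors cancel so that the surviving exponent in each coordinate is exactly $l_{s}+\gamma_{s}/2$ (resp. $k_{s}+\beta_{s}/2$), independently of the conjugation pattern $(i_{s})$, $(j_{s})$.
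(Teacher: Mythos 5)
Your proposal is correct and follows essentially the same route as the paper: both exploit the fact that every monomial of $g_{\gamma}$ (resp.\ $f_{\beta}$) carries the same net shift so that $T_{f_{\beta}}T_{g_{\gamma}}e_{\alpha}$ is a scalar multiple of a single basis vector, and both use Lemma \ref{ToeL1} plus Stirling's formula to extract the asymptotic $\alpha^{\frac{1}{2}(\beta+\gamma)+k+l}$ dominated by the top-degree term with nonzero coefficient $a_{k}b_{l}$. The only cosmetic difference is that you factor the coefficient as $\lambda_{g}(\alpha)\lambda_{f}(\alpha+\delta^{g})$ while the paper computes the combined coefficient $A^{\theta\vartheta\varphi\psi}_{\alpha}$ in one pass.
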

\begin{proof}
For simplicity, we set $i=(i_{1},\cdots,i_{n})$, $j=(j_{1},\cdots,j_{n})$ and denote
\begin{align*}
&\theta:=\theta_{\mu,\beta,i}=\left(\mu_{1}+\chi_{\{0\}}(i_{1})\beta_{1},\cdots, \mu_{n}+\chi_{\{0\}}(i_{n})\beta_{n}\right),\\
&\vartheta:=\vartheta_{\mu,\beta,i}=\left(\mu_{1}+\chi_{\{1\}}(i_{1})\beta_{1},\cdots, \mu_{n}+\chi_{\{1\}}(i_{n})\beta_{n}\right),\\
&\varphi:=\varphi_{\nu,\gamma,j}=\left(\nu_{1}+\chi_{\{0\}}(j_{1})\gamma_{1},\cdots, \nu_{n}+\chi_{\{0\}}(j_{n})\gamma_{n}\right),\\
&\psi:=\psi_{\nu,\gamma,j}=\left(\nu_{1}+\chi_{\{1\}}(j_{1})\gamma_{1},\cdots, \nu_{n}+\chi_{\{1\}}(j_{n})\gamma_{n}\right).
\end{align*}
For  $\alpha\in\mathbb{N}^{n}$ satisfying $\alpha_{s}\geq \chi_{\{1\}}(j_{s})\gamma_{s}+\chi_{\{1\}}(i_{s})\beta_{s}$ $(s=1,\cdots,n)$, we apply Lemma \ref{ToeL1} twice to obtain
\begin{align}\label{(ToeL21)}
\nonumber&\hs\hs T_{f_{\beta}}T_{g_{\gamma}}e_{\alpha}\\
\nonumber&=\sum_{\mu_{1}\leq k_{1}}\cdots\sum_{\mu_{n}\leq k_{n}}
\sum_{\nu_{1}\leq l_{1}}\cdots\sum_{\nu_{n}\leq l_{n}}
a^{(i_{1})}_{\mu_{1}}\cdots a^{(i_{n})}_{\mu_{n}}
b^{(j_{1})}_{\nu_{1}}\cdots b^{(j_{n})}_{\nu_{n}}
T_{z^{\theta}\bar{z}^{\vartheta}}
T_{z^{\varphi}\bar{z}^{\psi}}e_{\alpha}\\
\nonumber&=\sum_{\mu_{1}\leq k_{1}}\cdots\sum_{\mu_{n}\leq k_{n}}
\sum_{\nu_{1}\leq l_{1}}\cdots\sum_{\nu_{n}\leq l_{n}}
a^{(i_{1})}_{\mu_{1}}\cdots a^{(i_{n})}_{\mu_{n}}
b^{(j_{1})}_{\nu_{1}}\cdots b^{(j_{n})}_{\nu_{n}}
T_{z^{\theta}\bar{z}^{\vartheta}}\\
\nonumber&\hs\hs\times\Bigg(\sqrt{\frac{\alpha !(n-1+|\alpha|) !(n-1+|\alpha+\varphi-\psi|) !}{(\alpha+\varphi-\psi) !(m+n-1+|\alpha|) !(m+n-1+|\alpha+\varphi-\psi|)!}}\\
\nonumber&\hs\hs\times\frac{(\alpha+\varphi)!(m+n-1+|\alpha+\varphi|)!}
{\alpha !(n-1+|\alpha+\varphi|)!}\Bigg)
e_{\alpha+\varphi-\psi}\\
\nonumber&=\sum_{\mu_{1}\leq k_{1}}\cdots\sum_{\mu_{n}\leq k_{n}}
\sum_{\nu_{1}\leq l_{1}}\cdots\sum_{\nu_{n}\leq l_{n}}
a^{(i_{1})}_{\mu_{1}}\cdots a^{(i_{n})}_{\mu_{n}}
b^{(j_{1})}_{\nu_{1}}\cdots b^{(j_{n})}_{\nu_{n}}\\
\nonumber&\hs\hs\times\Bigg(\sqrt{\frac{\alpha !(n-1+|\alpha|) !(n-1+|\alpha+\varphi-\psi|) !}{(\alpha+\varphi-\psi) !(m+n-1+|\alpha|) !(m+n-1+|\alpha+\varphi-\psi|)!}}\frac{(\alpha+\varphi)!
(m+n-1+|\alpha+\varphi|)!}
{\alpha !(n-1+|\alpha+\varphi|)!}\\
\nonumber&\hs\hs\times
\sqrt{\frac{(\alpha+\varphi-\psi)!(n-1+|\alpha+\varphi-\psi|)!
(n-1+|\alpha+\varphi-\psi+\theta-\vartheta|)!}
{(\alpha+\varphi-\psi+\theta-\vartheta)!(m+n-1+|\alpha+\varphi-\psi|)!
(m+n-1+|\alpha+\varphi-\psi+\theta-\vartheta|)!}}\\
\nonumber&\hs\hs\times\frac{(\alpha+\varphi-\psi+\theta)!
(m+n-1+|\alpha+\varphi-\psi+\theta|)!}
{(\alpha+\varphi-\psi)!
(n-1+|\alpha+\varphi-\psi+\theta|)!}\Bigg)
e_{\alpha+\varphi-\psi+\theta-\vartheta}\\
&=\sum_{\mu_{1}\leq k_{1}}\cdots\sum_{\mu_{n}\leq k_{n}}
\sum_{\nu_{1}\leq l_{1}}\cdots\sum_{\nu_{n}\leq l_{n}}
a^{(i_{1})}_{\mu_{1}}\cdots a^{(i_{n})}_{\mu_{n}}
b^{(j_{1})}_{\nu_{1}}\cdots b^{(j_{n})}_{\nu_{n}}
A^{\theta\vartheta\varphi\psi}_{\alpha}
e_{\alpha+\varphi-\psi+\theta-\vartheta},
\end{align}
where
\begin{align*}
A^{\theta\vartheta\varphi\psi}_{\alpha}
&:=\sqrt{\frac{\alpha !(n-1+|\alpha|)!(n-1+|\alpha+\varphi-\psi+\theta-\vartheta|)!}
{(\alpha+\varphi-\psi+\theta-\vartheta)!(m+n-1+|\alpha|)!
(m+n-1+|\alpha+\varphi-\psi+\theta-\vartheta|)!}}\\
&\hs\hs\times\frac{(\alpha+\varphi)!(\alpha+\varphi-\psi+\theta)!
(m+n-1+|\alpha+\varphi|)!(n-1+|\alpha+\varphi-\psi|)!}
{\alpha !(\alpha+\varphi-\psi)!
(n-1+|\alpha+\varphi|)!(m+n-1+|\alpha+\varphi-\psi|)!}\\
&\hs\hs\times\frac{(m+n-1+|\alpha+\varphi-\psi+\theta|)!}
{(n-1+|\alpha+\varphi-\psi+\theta|)!}.
\end{align*}
An application of Stirling's formula implies that
\begin{align}\label{(ToeL22)}
A^{\theta\vartheta\varphi\psi}_{\alpha}
\sim \alpha^{\frac{1}{2}(\varphi+\psi+\theta+\vartheta)}
=\alpha^{\frac{1}{2}(\beta+\gamma)+\mu+\nu},
\quad\mu\leq k,\ \nu\leq l.
\end{align}
Since $a_{k_{i}}$, $b_{l_{i}}$ are nonzero constants for each $i=1,\cdots,n$, it follows from \eqref{(ToeL21)} and \eqref{(ToeL22)} that
\begin{align*}
\left\|T_{f_{\beta}}T_{g_{\gamma}}
e_{\alpha}\right\|_{2,m}
&=\left|\sum_{\mu_{1}\leq k_{1}}\cdots\sum_{\mu_{n}\leq k_{n}}
\sum_{\nu_{1}\leq l_{1}}\cdots\sum_{\nu_{n}\leq l_{n}}
a^{(i_{1})}_{\mu_{1}}\cdots a^{(i_{n})}_{\mu_{n}}
b^{(j_{1})}_{\nu_{1}}\cdots b^{(j_{n})}_{\nu_{n}}
A^{\theta\vartheta\varphi\psi}_{\alpha}\right|\\
&\sim \left|a^{(i_{1})}_{k_{1}}\cdots a^{(i_{n})}_{k_{n}}
b^{(j_{1})}_{l_{1}}\cdots b^{(j_{n})}_{l_{n}}
\alpha^{\frac{1}{2}(\beta+\gamma)+k+l}\right|.
\end{align*}
Therefore, if we denote
$$\mathcal{A}=\{\alpha\in \mathbb{N}^{n}:\alpha_{s}\geq \chi_{\{1\}}(j_{s})\gamma_{s}+\chi_{\{1\}}(i_{s})\beta_{s}\ \text{for any}\ s=1,\cdots,n\},$$
then the Toeplitz product $T_{f_{\beta}}T_{g_{\gamma}}$ is bounded
if and only if
$$\left\{ \left\|T_{f_{\beta}}T_{g_{\gamma}}e_{\alpha}
\right\|_{2,m}\right\}_{\alpha\in\mathcal{A}}
$$
is bounded on $F^{2,m}$, which is equivalent to $\beta=\gamma=k = l =(0,\cdots,0)$.
This completes the proof of Lemma \ref{ToeL2}.
\end{proof}

Next, we will use Lemma \ref{ToeL2} to prove the main theorem in this section. To this end, we first
recall that, if $f$ is a polynomial in  $z,\overline{z}\in\mathbb{C}^{n}$, then there exist $k=(k_{1},\cdots,k_{n})$ and
$l=(l_{1},\cdots,l_{n})\in\mathbb{N}^{n}$ such that
\begin{align}\label{fzz}
f(z,\overline{z})=\prod_{s=1}^{n}\left(\sum_{\beta_{s}\leq k_{s}}\sum_{\gamma_{s}\leq l_{s}}a_{\beta_{s}\gamma_{s},s}z_{s}^{\beta_{s}}\overline{z}_{s}^{\gamma_{s}}\right).
\end{align}

For any $s=1,\cdots,n$, let
\begin{align*}
i_{0,s}=\min\left\{\beta_{s}-\gamma_{s}:a_{\beta_{s}\gamma_{s}}\neq0,\beta_{s}\leq k_{s}, \gamma_{s}\leq l_{s}\right\},
\end{align*}
and
\begin{align*}
i_{1,s}=\max\left\{\beta_{s}-\gamma_{s}:a_{\beta_{s}\gamma_{s}}\neq0,\beta_{s}\leq k_{s}, \gamma_{s}\leq l_{s}\right\}.
\end{align*}

For each integer $\theta_{s}$ satisfying  $i_{0,s}\leq \theta_{s} \leq i_{1,s}$ $(s=1,\cdots,n)$, let $F_{\theta_{s}}(z_{s}, \overline{z_{s}})$ be the sum of all those terms $a_{\beta_{s}\gamma_{s}}z^{\beta_{s}}\overline{z}^{\gamma_{s}}$ in the polynomial formula $(\ref{fzz})$ of $f$ such that $\beta_{s}-\gamma_{s}= \theta_{s}$. If there is no such kind of term, we set $F_{\theta_{s}}=0$. Then $F_{\theta_{s}}$ is of the same form as the function $f_{\beta_{s}}$ (if $\theta_{s} \geq 0$) or the complex conjugate of  $f_{\beta_{s}}$ (if $\theta_{s} < 0$) in Lemma \ref{ToeL2}.
Thus, with this new notation, the expression in $(\ref{fzz})$ may be rewritten as
\begin{align*}
f(z,\overline{z})=\prod_{s=1}^{n}\left(\sum_{\theta_{s}=i_{0,s}}^{i_{1,s}}
F_{\theta_{s}}(z_{s}, \overline{z_{s}})\right).%
\end{align*}

%
%

Now, we give the proof the first main result.

\begin{proof}[\pmb{Proof of Theorem \ref{TP}}]
If both $f$ and $g$ are constants, then it is easy to check  Toeplitz operators  $T_{f}$ and $T_{g}$ are both bounded on $F^{2,m}$. Hence the Toeplitz product $T_{f}T_{g}$ is bounded on $F^{2,m}$.

Conversely, suppose the Toeplitz product $T_{f}T_{g}$ is bounded. Since
both $f$ and $g$ are polynomials in $z,\overline{z}\in\mathbb{C}^{n}$, from the above discussion, $f$ and $g$ admit expansions:
\begin{align*}
f(z,\overline{z})=\prod_{s=1}^{n}\left(\sum_{\theta_{s}=i_{0,s}}^{i_{1,s}}
F_{\theta_{s}}(z_{s}, \overline{z_{s}})\right),\quad%
g(z,\overline{z})=\prod_{t=1}^{n}\left(\sum_{\tau_{t}=j_{0,t}}^{j_{1,t}}
G_{\tau_{t}}(z_{t}, \overline{z_{t}})\right),%
\end{align*}
where $F_{i_{0,s}}(z_{s}, \overline{z_{s}})$, $F_{i_{1,s}}(z_{s}, \overline{z_{s}})$, $G_{j_{0,t}}(z_{t}, \overline{z_{t}})$ and $G_{j_{1,t}}(z_{t}, \overline{z_{t}})$ are nonzero for all $s,t=1,\cdots,n$. In what follows, we write
$$F_{\theta_{s}}:=F_{\theta_{s}}(z_{s}, \overline{z_{s}}), \quad G_{\tau_{t}}:=G_{\tau_{t}}(z_{t}, \overline{z_{t}})$$
for simplicity. Therefore
\begin{align}
T_{f}T_{g}e_{\alpha}
\nonumber&=\sum_{\theta_{1}=i_{0,1}}^{i_{1,1}}\cdots
\sum_{\theta_{n}=i_{0,n}}^{i_{1,n}}
\sum_{\tau_{1}=j_{0,1}}^{j_{1,1}}\cdots\sum_{\tau_{n}=j_{0,n}}^{j_{1,n}}
T_{F_{\theta_{1}}\cdots F_{\theta_{n}}}
T_{G_{\tau_{1}}\cdots G_{\tau_{n}}}e_{\alpha}\\
\label{(TP1)}&=T_{F_{i_{1,1}}\cdots F_{i_{1,n}}}
T_{G_{j_{1,1}}\cdots G_{j_{1,n}}}e_{\alpha}
+\sum_{\substack{(\theta_{1},\cdots,\theta_{n},\tau_{1},\cdots,\tau_{n})
\neq\\(i_{1,1},\cdots,i_{1,n},j_{1,1},\cdots,j_{1,n})}}
T_{F_{\theta_{1}}\cdots F_{\theta_{n}}}
T_{G_{\tau_{1}}\cdots G_{\tau_{n}}}e_{\alpha}.
\end{align}
Set multi-index
$$
\kappa=\left(\max \left\{\left|i_{0,1}\right|,\left|i_{1,1}\right|\right\}+\max \left\{\left|j_{0,1}\right|,\left|j_{1,1}\right|\right\}
,\cdots,\max \left\{\left|i_{0,n}\right|,\left|i_{1,n}\right|\right\}+\max \left\{\left|j_{0,n}\right|,\left|j_{1,n}\right|\right\}\right).
$$
It follows from the definitions of $F_{\theta_{s}}$, $G_{\tau_{t}}$ and the proof of Lemma \ref{ToeL2} that for any $\alpha\geq\kappa$, $\beta=\left(\theta_{1},\cdots,\theta_{n}\right),\
\gamma=\left(\tau_{1},\cdots,\tau_{n}\right)$ with $i_{0,s} \leq \theta_{s} \leq i_{1,s}$ and $j_{0,t} \leq \tau_{t}\leq j_{1,t}$ ($s,t=1,\cdots,n$) such that $(\theta_{1},\cdots,\theta_{n},\tau_{1},\cdots,\tau_{n})
\neq(i_{1,1},\cdots,i_{1,n},j_{1,1},\cdots,j_{1,n})$, we have
$$
T_{F_{\theta_{1}}\cdots F_{\theta_{n}}}
T_{G_{\tau_{1}}\cdots G_{\tau_{n}}}e_{\alpha}\in \text{Span}\{e_{\alpha+\beta+\gamma}\}.
$$
Notice that the first term of (\ref{(TP1)})
$$T_{F_{i_{1,1}}\cdots F_{i_{1,n}}}
T_{G_{j_{1,1}}\cdots G_{j_{1,n}}}e_{\alpha}\in \text{Span}\{e_{\alpha+\beta^{\prime}+\gamma^{\prime}}\},$$
where $\beta^{\prime}=(i_{1,1},\cdots,i_{1,n})$ and $\gamma^{\prime}=(j_{1,1},\cdots,j_{1,n})$, we see that  $T_{F_{i_{1,1}}\cdots F_{i_{1,n}}}T_{G_{j_{1,1}}\cdots G_{j_{1,n}}}e_{\alpha}$ is orthogonal to the second term of (\ref{(TP1)}) for $\alpha\geq\kappa$.
It follows that
\begin{align*}
\left\|T_{f}T_{g}e_{\alpha}\right\|_{2,m}
\geq\left\|T_{F_{i_{1,1}}\cdots F_{i_{1,n}}}
T_{G_{j_{1,1}}\cdots G_{j_{1,n}}}e_{\alpha}\right\|_{2,m}.
\end{align*}
By the proof of Lemma \ref{ToeL2}, it is easy to see that $T_{F_{i_{1,1}}\cdots F_{i_{1,n}}}
T_{G_{j_{1,1}}\cdots G_{j_{1,n}}}$ is bounded
if and only if the sequence
$$\left\{\left\|T_{F_{i_{1,1}}\cdots F_{i_{1,n}}}
T_{G_{j_{1,1}}\cdots G_{j_{1,n}}}e_{\alpha}\right\|_{2,m}\right\}_{\alpha \geq \kappa}$$
is bounded.
Thus the boundedness of $T_{f}T_{g}$ implies  the boundedness of $T_{F_{i_{1,1}}\cdots F_{i_{1,n}}}T_{G_{j_{1,1}}\cdots G_{j_{1,n}}}$. This along with Lemma \ref{ToeL2} implies that $F_{i_{1,1}},\cdots, F_{i_{1,n}}$ and $G_{j_{1,1}},\cdots, G_{j_{1,n}}$ must be constants.
Similarly, we can also conclude that $T_{F_{i_{0,1}}\cdots F_{0_{1,n}}}
T_{G_{j_{0,1}}\cdots G_{j_{0,n}}}$  is bounded if (\ref{(TP1)}) is replaced by
\begin{align}
T_{f}T_{g}e_{\alpha}
\label{(TP3)}
=T_{F_{i_{0,1}}\cdots F_{i_{0,n}}}
T_{G_{j_{0,1}}\cdots G_{j_{0,n}}}e_{\alpha}
+\sum_{\substack{(\theta_{1},\cdots,\theta_{n},\tau_{1},\cdots,\tau_{n})
\neq\\(i_{0,1},\cdots,i_{0,n},j_{0,1},\cdots,j_{0,n})}}
T_{F_{\theta_{1}}\cdots F_{\theta_{n}}}
T_{G_{\tau_{1}}\cdots G_{\tau_{n}}}e_{\alpha},
\end{align}
where the summation  is taken over all $i_{0,1}\leq\theta_{1}\leq i_{1,1},\cdots,i_{0,n}\leq\theta_{n}\leq i_{1,n}$,
$j_{0,1}\leq\tau_{1}\leq j_{1,1},\cdots,j_{0,n}\leq\tau_{n}\leq j_{1,n}$ such that $(\theta_{1},\cdots,\theta_{n},\tau_{1},\cdots,\tau_{n})
\neq(i_{0,1},\cdots,i_{0,n},j_{0,1},\cdots,j_{0,n})$.
By Lemma \ref{ToeL2} again, $F_{i_{0,1}},\cdots, F_{i_{0,n}}$ and $G_{j_{0,1}},\cdots, G_{j_{0,n}}$ must be constants. Thus $f$ and $g$ are constants.
This completes the proof of the theorem.
\end{proof}
\begin{corollary}
Assume that $f$ is a polynomial in $z,\overline{z}\in\mathbb{C}^{n}$. Then the Toeplitz operator $T_{f}$ is bounded on $F^{2,m}$ if and only if $f$ is a constant.
\end{corollary}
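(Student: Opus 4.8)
The plan is to deduce this corollary directly from Theorem \ref{TP}, since all the substantive work has already been done there. The forward implication is immediate: if $f\equiv c$ is a constant, then for every $g\in F^{2,m}$ we have $T_{f}g=P_{m}(cg)=cg$, so $T_{f}=cI$ is bounded on $F^{2,m}$.

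For the converse, the key observation is that Theorem \ref{TP} is a statement about \emph{products} of Toeplitz operators, so I would reduce the single-operator boundedness to a product boundedness by squaring. Concretely, suppose $T_{f}$ is bounded. Then the composition $T_{f}T_{f}$ is a product of two bounded operators and is therefore bounded on $F^{2,m}$. Since $f$ is a polynomial in $z,\overline{z}\in\mathbb{C}^{n}$, I can apply Theorem \ref{TP} with $g=f$: the boundedness of the product $T_{f}T_{f}$ forces both factors to be constant, hence $f$ is a constant. This closes the equivalence.

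An equally short alternative is to take the second symbol to be the constant $1$. Because $T_{1}g=P_{m}(g)=g$ for $g\in F^{2,m}$, we have $T_{1}=I$ and hence $T_{f}=T_{f}T_{1}$; applying Theorem \ref{TP} to this product again yields that $f$ must be constant. I do not expect any genuine obstacle here: the only point requiring care is the elementary verification that $T_{f}$ can be realized as a Toeplitz product to which Theorem \ref{TP} applies (either $T_{f}^{2}$ or $T_{f}T_{1}$), after which the conclusion is immediate. The entire content of the corollary is thus subsumed by the main theorem.
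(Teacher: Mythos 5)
Your proposal is correct and matches the paper's own argument, which likewise deduces the corollary from Theorem \ref{TP} by taking $g=1$ or $g=f$. Both of your reductions ($T_{f}=T_{f}T_{1}$ and the bounded square $T_{f}T_{f}$) are exactly the routes the paper indicates.
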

\begin{proof}
It follows from Theorem \ref{TP} by setting $g = 1$ or $g = f$.
\end{proof}
\section{Hankel Products}\label{section3}
\ \ \ \  In this section, we are  to characterize bounded Hankel Products $H^{*}_{f}H_{g}$ with $f, g\in\mathcal{P}$. For technical reasons, we require the following lemma.
\begin{lemma}\label{HanL1}
Assume $\beta=(\beta_{1},\cdots,\beta_{n})$, $\gamma=(\gamma_{1},\cdots,\gamma_{n})$, $\mu=(\mu_{1},\cdots,\mu_{n})$ and $\nu=(\nu_{1},\cdots,\nu_{n})$ are all in $\mathbb{N}^{n}$. Let
$f = z^{\beta}\overline{z}^{\gamma}$ and $g = z^{\mu}\overline{z}^{\nu}$ for $z,\overline{z}\in\mathbb{C}^{n}$. Then for any
$\alpha\geq (|\gamma_{1} - \beta_{1}| + |\mu_{1} - \nu_{1}|,\cdots,|\gamma_{n} - \beta_{n}| + |\mu_{n} - \nu_{n}|)$, we have
$$
H_{f}^{*} H_{g} e_{\alpha}=A_{\alpha} e_{\alpha+\gamma +\mu- \beta - \nu},
$$
where
\begin{align}\label{(HanL11)}
\nonumber A_{\alpha}&=\bigg(\frac{(\alpha+\gamma+\mu)!(m+n-1+|\alpha+\gamma+\mu|)!}
{\alpha !(n-1+|\alpha+\gamma+\mu|)! }-\frac{(\alpha+\mu)!(\alpha+\gamma+\mu-\nu)!}
{\alpha !(\alpha+\mu- \nu) ! }\\
\nonumber&\hspace{0.6cm}\times\frac{(m+n-1+|\alpha+\mu|)!(n-1+|\alpha +\mu - \nu|) !(m+n-1+|\alpha+\gamma+\mu-\nu|)!}
{(n-1+|\alpha+\mu|)!(m+n-1+|\alpha +\mu - \nu|) !(n-1+|\alpha+\gamma+\mu-\nu|)! }\bigg)\\
&\hspace{0.6cm}\times\sqrt{\frac{\alpha !(n-1+|\alpha|) !(n-1+|\alpha+\gamma +\mu- \beta - \nu|) !}{(\alpha+\gamma +\mu- \beta - \nu) !(m+n-1+|\alpha|) !(m+n-1+|\alpha+\gamma +\mu- \beta - \nu|) !}}.
\end{align}
Furthermore, $A_{\alpha}=0$ if and only if $\gamma= 0$ or $\nu = 0$. And if $A_{\alpha}\neq0$, then
\begin{align}\label{(HanL101)}
A_{\alpha}\sim \left(\sum_{j=1}^{n}\gamma_{j}\nu_{j}\alpha_{j}^{-1}\right)
\alpha^{\frac{\beta+\nu+\gamma+\mu}{2}}.
\end{align}
\end{lemma}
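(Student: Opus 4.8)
The plan is to reduce everything to Lemma~\ref{ToeL1} by first establishing the operator identity
\[
H_f^* H_g = T_{\overline{f}g} - T_{\overline{f}}\,T_g .
\]
To obtain it I would pair both sides against an arbitrary basis vector $e_\sigma$. Writing $H_g e_\alpha = (I-P_m)(g e_\alpha)$, $H_f e_\sigma=(I-P_m)(fe_\sigma)$ and using that $I-P_m$ is a self-adjoint idempotent gives $\langle H_f^* H_g e_\alpha, e_\sigma\rangle_m = \langle (I-P_m)(ge_\alpha), fe_\sigma\rangle_m = \langle ge_\alpha, fe_\sigma\rangle_m - \langle T_g e_\alpha, fe_\sigma\rangle_m$. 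Since $e_\sigma$ and $T_g e_\alpha$ lie in $F^{2,m}$, the orthogonal projection lets me replace each integrand by its projection, producing $\langle T_{\overline f g}e_\alpha,e_\sigma\rangle_m - \langle T_{\overline f}T_g e_\alpha, e_\sigma\rangle_m$ (I use $T_f^*=T_{\overline f}$, a one-line consequence of the definition). With $f=z^\beta\overline z^\gamma$ and $g=z^\mu\overline z^\nu$ one has $\overline f g = z^{\gamma+\mu}\overline z^{\beta+\nu}$ and $\overline f = z^\gamma\overline z^\beta$, so both operators on the right have monomial symbols.

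Next I would apply Lemma~\ref{ToeL1}: once to get $T_{\overline f g}e_\alpha = C_1\,e_{\alpha+\gamma+\mu-\beta-\nu}$, and twice (first $T_g e_\alpha = C_2\, e_{\alpha+\mu-\nu}$, then $T_{\overline f}e_{\alpha+\mu-\nu} = C_3\,e_{\alpha+\gamma+\mu-\beta-\nu}$) to get $T_{\overline f}T_g e_\alpha = C_2C_3\,e_{\alpha+\gamma+\mu-\beta-\nu}$. The hypothesis $\alpha \ge (|\gamma_i-\beta_i|+|\mu_i-\nu_i|)_{i}$ is exactly what forces the intermediate index $\alpha+\mu-\nu$ and the final index $\alpha+\gamma+\mu-\beta-\nu$ to be $\ge 0$, so no term is truncated to $0$ and both contributions genuinely land on the single vector $e_{\alpha+\gamma+\mu-\beta-\nu}$. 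Hence $A_\alpha = C_1 - C_2C_3$, and the rest is factorial bookkeeping: the square-root factor in \eqref{(HanL11)} is common to $C_1$. For $C_2C_3$, after cancelling the shared $(\alpha+\mu-\nu)!$ between the two square roots and regrouping, the product of those roots collapses to that same root times one leftover factorial ratio, which merges with the rational prefactors of $C_2$ and $C_3$ to reproduce exactly the second bracketed term of \eqref{(HanL11)}. Factoring the common root out of $C_1-C_2C_3$ then yields \eqref{(HanL11)} verbatim.

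For the vanishing criterion the easy direction is structural: if $\gamma=0$ then $f=z^\beta$ is holomorphic, $fe_\alpha$ is a polynomial, $P_m(fe_\alpha)=fe_\alpha$, so $H_f=0$ and $A_\alpha=0$; symmetrically $\nu=0$ makes $g$ holomorphic and $H_g=0$. For the converse together with \eqref{(HanL101)} I would apply Stirling's formula to $A_\alpha = S\,(B_1-B_2)$, where $S>0$ is the common root and $B_1,B_2$ are the two rational factorial products. A first-order estimate (using $(\alpha+s)!/\alpha!\sim\alpha^{s}$ and $(m+n-1+|x|)!/(n-1+|x|)!\sim|x|^{m}$) shows $B_1\sim B_2\sim \alpha^{\gamma+\mu}|\alpha|^{m}$, together with $S\sim \alpha^{-(\gamma+\mu-\beta-\nu)/2}|\alpha|^{-m}$. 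Combining these with the exponent identity $(\gamma+\mu)-\tfrac12(\gamma+\mu-\beta-\nu)=\tfrac12(\beta+\gamma+\mu+\nu)$ will deliver the power $\alpha^{(\beta+\gamma+\mu+\nu)/2}$.

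The main obstacle is precisely the cancellation in $B_1-B_2$: the two products agree to leading order, so the difference is governed by the next Stirling term, and I expect the delicate part to be carrying that expansion far enough, uniformly in $\alpha$, to see that the surviving cross-terms assemble into the factor $\sum_{j}\gamma_j\nu_j\alpha_j^{-1}$ rather than into an exact zero. This is what simultaneously pins down the rate in \eqref{(HanL101)} and, through $A_\alpha=0\iff B_1=B_2$, forces $A_\alpha\neq0$ whenever $\gamma\neq0$ and $\nu\neq0$; confirming the genuine nonvanishing of this cross-term (equivalently, that the remainder cannot conspire to make $B_1=B_2$) is the step I would budget the most care for.
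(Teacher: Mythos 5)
Your proposal follows essentially the same route as the paper: the identity $H_f^*H_g=T_{\overline f g}-T_{\overline f}T_g$, two applications of Lemma \ref{ToeL1} to land both contributions on $e_{\alpha+\gamma+\mu-\beta-\nu}$, and a second-order Stirling expansion of the difference $B_1-B_2$ whose leading terms cancel, leaving the cross-term $\sum_j\gamma_j\nu_j\alpha_j^{-1}$. You correctly identify the delicate cancellation as the crux, which is exactly where the paper expends its effort.
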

\begin{proof}
We only give the proof for the case of $m\neq0$, since the case of $m=0$ is much simpler.
It is easy to verify that
\begin{align}\label{(HanL12)}
H_{f}^{*} H_{g}=T_{\bar{f} g}-T_{\bar{f}} T_{g}=T_{z^{\gamma+\mu} \bar{z}^{\beta+\nu}}-T_{z^{\gamma} \bar{z}^{\beta}} T_{z^{\mu} \bar{z}^{\nu}}.
\end{align}
It follows from Lemma \ref{ToeL1} that for any $\alpha\geq (|\gamma_{1} - \beta_{1}| + |\mu_{1} - \nu_{1}|,\cdots,|\gamma_{n} - \beta_{n}| + |\mu_{n} - \nu_{n}|)$, we have
\begin{align}\label{(HanL13)}
\nonumber&\hspace{0.6cm}T_{z^{\gamma+\mu} \bar{z}^{\beta+\nu}}e_{\alpha}\\
\nonumber&=\sqrt{\frac{\alpha !(n-1+|\alpha|) !(n-1+|\alpha+\gamma +\mu- \beta - \nu|) !}{(\alpha+\gamma +\mu- \beta - \nu) !(m+n-1+|\alpha|) !(m+n-1+|\alpha+\gamma +\mu- \beta - \nu|) !}}\\
&\hspace{0.6cm}\times\frac{(\alpha+\gamma+\mu)!(m+n-1+|\alpha+\gamma+\mu|)!}
{\alpha !(n-1+|\alpha+\gamma+\mu|)! }
e_{\alpha+\gamma +\mu- \beta - \nu}.
\end{align}
Applying Lemma \ref{ToeL1} again, we obtain
\begin{align}\label{(HanL14)}
\nonumber&\hspace{0.6cm}T_{z^{\gamma} \bar{z}^{\beta}} T_{z^{\mu}\bar{z}^{\nu}}e_{\alpha}\\
\nonumber&=T_{z^{\gamma} \bar{z}^{\beta}} \sqrt{\frac{\alpha !(n-1+|\alpha|) !(n-1+|\alpha +\mu - \nu|) !}{(\alpha+\mu- \nu)!(m+n-1+|\alpha|) !(m+n-1+|\alpha +\mu- \nu|) !}}\\
\nonumber&\hspace{0.6cm}\times\frac{(\alpha+\mu)!(m+n-1+|\alpha+\mu|)!}
{\alpha !(n-1+|\alpha+\mu|)! }
e_{\alpha +\mu- \nu}\\
\nonumber&=\sqrt{\frac{(\alpha+\mu- \nu) !(n-1+|\alpha+\mu- \nu|) !(n-1+|\alpha+\gamma +\mu- \beta - \nu|) !}{(\alpha+\gamma +\mu- \beta - \nu) !(m+n-1+|\alpha+\mu- \nu|) !(m+n-1+|\alpha+\gamma +\mu- \beta - \nu|) !}}\\
\nonumber&\hspace{0.6cm}\times\frac{(\alpha+\mu)!(\alpha+\gamma+\mu-\nu)!(m+n-1+|\alpha+\mu|)!(m+n-1+|\alpha+\gamma+\mu-\nu|)!}
{\alpha !(\alpha+\mu- \nu) !(n-1+|\alpha+\mu|)!(n-1+|\alpha+\gamma+\mu-\nu|)! }\\
\nonumber&=\sqrt{\frac{\alpha !(n-1+|\alpha|)!(n-1+|\alpha+\gamma +\mu- \beta - \nu|) !}{(\alpha+\gamma +\mu- \beta - \nu) !(m+n-1+|\alpha|) !(m+n-1+|\alpha+\gamma +\mu- \beta - \nu|) !}}\\
\nonumber&\hspace{0.6cm}\times\frac{(m+n-1+|\alpha+\mu|)!(n-1+|\alpha +\mu - \nu|) !(m+n-1+|\alpha+\gamma+\mu-\nu|)!}
{(n-1+|\alpha+\mu|)!(m+n-1+|\alpha +\mu - \nu|) !(n-1+|\alpha+\gamma+\mu-\nu|)! }\\
&\hspace{0.6cm}\times\frac{(\alpha+\mu)!(\alpha+\gamma+\mu-\nu)!}
{\alpha !(\alpha+\mu- \nu) ! }e_{\alpha+\gamma +\mu- \beta - \nu}.
\end{align}
Combining \eqref{(HanL12)}-\eqref{(HanL14)}, we deduce the explicit formula for $A_{\alpha}$ in \eqref{(HanL11)}. From  this formula, it is not hard to see that $A_{\alpha}=0$ is equivalent to  $\gamma= 0$ or $\nu = 0$.

If $A_{\alpha}\neq0$, then by Stirling's formula, we have
\begin{align}\label{(HanL15)}
\sqrt{\frac{\alpha !(n-1+|\alpha|) !(n-1+|\alpha+\gamma +\mu- \beta - \nu|) !}{(\alpha+\gamma +\mu- \beta - \nu) !(n-1+m+|\alpha|)!(n-1+m+|\alpha+\gamma +\mu- \beta - \nu|) !}}
\sim  \alpha^{\frac{\beta+\nu-\gamma-\mu}{2}}|\alpha|^{-m}.
\end{align}
Denote
\begin{align}\label{(HanL16)}
\nonumber B_{\alpha}:&=\frac{(\alpha+\gamma+\mu)!(m+n-1+|\alpha+\gamma+\mu|)!}
{\alpha !(n-1+|\alpha+\gamma+\mu|)! }-\frac{(\alpha+\mu)!(\alpha+\gamma+\mu-\nu)!}
{\alpha !(\alpha+\mu- \nu) ! }\\
&\hspace{0.6cm}\times\frac{(m+n-1+|\alpha+\mu|)!(n-1+|\alpha +\mu - \nu|) !(m+n-1+|\alpha+\gamma+\mu-\nu|)!}
{(n-1+|\alpha+\mu|)!(m+n-1+|\alpha +\mu - \nu|) !(n-1+|\alpha+\gamma+\mu-\nu|)! }
\end{align}
for simplicity.
Next, we study the asymptotic behavior of $B_{\alpha}$ as each component $\alpha_j$ tends to infinity. Firstly, we estimate the first term of $B_{\alpha}$.
\begin{align}\label{(HanL17)}
\nonumber&\hspace{0.6cm}\frac{(\alpha+\gamma+\mu)!(m+n-1+|\alpha+\gamma+\mu|)!}
{ \alpha !(n-1+|\alpha+\gamma+\mu|)!}\\
\nonumber&=\left(\prod_{j=1}^{n}\prod_{i=1}^{\gamma_{j}+\mu_{j}}
(\alpha_{j}+i)\right)
 \prod_{i=1}^{m}(n-1+|\alpha+\gamma+\mu|+i)\\
\nonumber&=\left(\prod_{j=1}^{n}
\Bigg(\alpha_{j}^{\gamma_{j}+\mu_{j}}
+\bigg(\sum_{i=1}^{\gamma_{j}+\mu_{j}}i\bigg)
\alpha_{j}^{\gamma_{j}+\mu_{j}-1}
+
O(\alpha_{j}^{\gamma_{j}+\mu_{j}-2})\Bigg)\right)\\
\nonumber&\hspace{0.6cm}\times\left(|\alpha|^{m}
+\bigg(\sum_{i=1}^{m}(n-1+|\gamma|+|\mu|+i)
\bigg)|\alpha|^{m-1}+O(|\alpha|^{m-2})\right)\\
\nonumber&=\left(\alpha^{\gamma+\mu}+\sum_{j=1}^{n}
\bigg(\sum_{i=1}^{\gamma_{j}+\mu_{j}}i
\bigg)\alpha_{j}^{-1}\alpha^{\gamma+\mu}
+\sum_{j,k=1}^{n}O(\alpha^{\gamma+\mu}\alpha_{j}^{-1}\alpha_{k}^{-1})\right)\\
\nonumber&\hspace{0.6cm}\times
\left(|\alpha|^{m}+\bigg(\sum_{i=1}^{m}(n-1+|\gamma|+|\mu|+i)
\bigg)|\alpha|^{m-1}+O(|\alpha|^{m-2})\right)\\
\nonumber&=\alpha^{\gamma+\mu}|\alpha|^{m}+\sum_{j=1}^{n}
\left(\sum_{i=1}^{\gamma_{j}+\mu_{j}}i\right)\alpha_{j}^{-1}
\alpha^{\gamma+\mu}|\alpha|^{m}
+\left(\sum_{i=1}^{m}(n-1+|\gamma|
+|\mu|+i)\right)
\alpha^{\gamma+\mu}|\alpha|^{m-1}\\
&\hspace{0.6cm}+O(\alpha^{\gamma+\mu}
|\alpha|^{m-2})+\sum_{j=1}^{n}O(
\alpha^{\gamma+\mu}\alpha_{j}^{-1}|\alpha|^{m-1})
+\sum_{j,k=1}^{n}O(\alpha^{\gamma+\mu}
\alpha_{j}^{-1}\alpha_{k}^{-1}|\alpha|^{m}).
\end{align}
Besides,
\begin{align}\label{(HanL18)}
&\hspace{0.6cm}\frac{(\alpha+\mu)!(\alpha+\gamma+\mu-\nu)!}
{\alpha !(\alpha+\mu- \nu) ! }\nonumber\\
&=\left(\prod_{j=1}^{n}\prod_{i=1}^{\mu_{j}}(\alpha_{j}+i)\right)\left(\prod_{j=1}^{n}\prod_{i=1}^{\gamma_{j}}(\alpha_{j}+\mu_{j}-\nu_{j}+i)\right)\nonumber\\
&=\left(\prod_{j=1}^{n}\left(\alpha_{j}^{\mu_{j}}+\left(\sum_{i=1}^{\mu_{j}}i\right)
\alpha_{j}^{\mu_{j}-1}+O(\alpha_{j}^{\mu_{j}-2})\right)\right)\nonumber\\
&\hspace{0.6cm}\times\left(\prod_{j=1}^{n}\left(\alpha_{j}^{\gamma_{j}}+\left(\sum_{i=1}^{\gamma_{j}}
(\mu_{j}- \nu_{j}+i)\right)\alpha_{j}^{\gamma_{j}-1}
+O(\alpha_{j}^{\gamma_{j}-2})\right)\right)\nonumber\\
&=\left(\alpha^{\mu}
+\sum_{j=1}^{n}\left(\sum_{i=1}^{\mu_{j}}i\right)
\alpha_{j}^{-1}\alpha^{\mu}+
\sum_{j,k=1}^{n}O(\alpha^{\mu}\alpha_{j}^{-1}\alpha_{k}^{-1})\right)\nonumber\\
&\hspace{0.6cm}\times\left(\alpha^{\gamma}+\sum_{j=1}^{n}\left(\sum_{i=1}^{\gamma_{j}}
(\mu_{j}- \nu_{j}+i)\right)\alpha_{j}^{-1}\alpha^{\gamma}
+\sum_{j,k=1}^{n}O(\alpha^{\gamma}\alpha_{j}^{-1}\alpha_{k}^{-1})\right)\nonumber\\
&=\alpha^{\mu+\gamma}+\sum_{j=1}^{n}
\left(\left(\sum_{i=1}^{\mu_{j}}i\right)+\left(\sum_{i=1}^{\gamma_{j}}
(\mu_{j}- \nu_{j}+i)\right)\right)\alpha_{j}^{-1}\alpha^{\mu+\gamma}
+\sum_{j,k=1}^{n}O(\alpha^{\mu+\gamma}\alpha_{j}^{-1}\alpha_{k}^{-1})
\end{align}
and
\begin{align}\label{(HanL19)}
\nonumber&\frac{(m+n-1+|\alpha+\mu|)!(n-1+|\alpha +\mu - \nu|) !(m+n-1+|\alpha+\gamma+\mu-\nu|)!}
{(n-1+|\alpha+\mu|)!(m+n-1+|\alpha +\mu - \nu|) !(n-1+|\alpha+\gamma+\mu-\nu|)! }\\
\nonumber=&\prod_{i=1}^{m}
\frac{(n-1+|\alpha+\mu|+i)(n-1+|\alpha+\gamma+\mu-\nu|+i)}
{(n-1+|\alpha+\mu-\nu|+i)}\\
\nonumber=&\frac{|\alpha|^{m}
+\left(\sum_{i=1}^{m}(n-1+|\mu|+i)
\right)|\alpha|^{m-1}+O(|\alpha|^{m-2})}
{|\alpha|^{m}+\left(\sum_{i=1}^{m}(n-1+|\mu|-|\nu|+i)
\right)|\alpha|^{m-1}+O(|\alpha|^{m-2})}\\
\nonumber\hspace{0.6cm}&\times\left(|\alpha|^{m}
+\left(\sum_{i=1}^{m}(n-1+|\mu|+|\gamma|-|\nu|+i)
\right)|\alpha|^{m-1}+O(|\alpha|^{m-2})\right)\\
=&|\alpha|^{m}+\left(\sum_{i=1}^{m}(n-1+|\mu|+|\gamma|+i)
\right)|\alpha|^{m-1}+O(|\alpha|^{m-2}),
\end{align}
which implies that
\begin{align}\label{(HanL20)}
&\frac{(\alpha+\mu)!(\alpha+\gamma+\mu-\nu)!}
{\alpha !(\alpha+\mu- \nu) ! }\times\frac{(m+n-1+|\alpha+\mu|)!(n-1+|\alpha +\mu - \nu|) !(m+n-1+|\alpha+\gamma+\mu-\nu|)!}
{(n-1+|\alpha+\mu|)!(m+n-1+|\alpha +\mu - \nu|) !(n-1+|\alpha+\gamma+\mu-\nu|)! }\nonumber\\
=&\alpha^{\gamma+\mu}|\alpha|^{m}
+\sum_{j=1}^{n}\left(\left(\sum_{i=1}^{\mu_{j}}i\right)+
\left(\sum_{i=1}^{\gamma_{j}}(\mu_{j}- \nu_{j}+i)\right)\right)\alpha_{j}^{-1}\alpha^{\gamma+\mu}|\alpha|^{m}\nonumber\\
\hspace{0.6cm}&+\left(\sum_{i=1}^{m}(n-1+|\mu|+|\gamma|+i)
\right)\alpha^{\gamma+\mu}|\alpha|^{m-1}\nonumber\\
\hspace{0.6cm}&+O(\alpha^{\gamma+\mu}|\alpha|^{m-2})
+\sum_{j=1}^{n}O(\alpha^{\gamma+\mu}
\alpha_{j}^{-1}|\alpha|^{m-1})+\sum_{j,k=1}^{n}
O(\alpha^{\gamma+\mu}\alpha_{j}^{-1}\alpha_{k}^{-1}|\alpha|^{m}).
\end{align}
Subtracting \eqref{(HanL20)} from \eqref{(HanL17)}, we obtain
\begin{align*}
B_{\alpha}=\left(\sum_{j=1}^{n}\gamma_{j}\nu_{j}\alpha_{j}^{-1}\right)
\alpha^{\gamma+\mu}|\alpha|^{m}
+O(\alpha^{\gamma+\mu}|\alpha|^{m-2})
+\sum_{j=1}^{n}O(\alpha^{\gamma+\mu}\alpha_{j}^{-1}|\alpha|^{m-1})
+\sum_{j,k=1}^{n}O(\alpha^{\gamma+\mu}
\alpha_{j}^{-1}\alpha_{k}^{-1}|\alpha|^{m}).
\end{align*}
This along with \eqref{(HanL15)} gives
\begin{align*}
A_{\alpha}\sim \left(\sum_{j=1}^{n}\gamma_{j}\nu_{j}\alpha_{j}^{-1}\right)
\alpha^{\frac{\beta+\nu+\gamma+\mu}{2}}.
\end{align*}
This completes the proof of Lemma \ref{HanL1}.
\end{proof}

\begin{lemma}\label{HanL2}
Suppose $\beta=(\beta_{1},\cdots,\beta_{n})$, $\gamma=(\gamma_{1},\cdots,\gamma_{n})$, $k=(k_{1},\cdots,k_{n})$ and
$l=(l_{1},\cdots,l_{n})$ are in $\mathbb{N}^{n}$. For any $z=(z_{1},\cdots,z_{n})\in\mathbb{C}^{n}$, let
\begin{align*}
f_{\beta_{i}}(z_{i})=\sum_{\mu_{i}\leq k_{i}}
a_{\mu_{i}}z_{i}^{\mu_{i}+\beta_{i}}\overline{z}_{i}^{\mu_{i}},\quad
g_{\gamma_{i}}(z_{i})=\sum_{\nu_{i}\leq l_{i}}
b_{\nu_{i}}z_{i}^{\nu_{i}+\gamma_{i}}\overline{z}_{i}^{\nu_{i}},
\end{align*}
where $a_{\mu_{i}}$, $b_{\nu_{i}}$ are constants with $a_{k_{i}}$, $b_{l_{i}}$ nonzero  for each $i=1,\cdots,n$. For $i_{1},\cdots,i_{n}$, $j_{1},\cdots,j_{n} \in \left\{0, 1\right\}$, let
\begin{align*}
f_{\beta}(z)=f^{(i_{1})}_{\beta_{1}}(z_{1})\cdots f^{(i_{n})}_{\beta_{n}}(z_{n}),\quad
g_{\gamma}(z)=g^{(j_{1})}_{\gamma_{1}}(z_{1})\cdots g^{(j_{n})}_{\gamma_{n}}(z_{n}).
\end{align*}
Then the Hankel product $H^{*}_{f_{\beta}}H_{g_{\gamma}}$
is bounded on $F^{2,m}$  if and only if  at least one of the following conditions holds:

$(1)$ $k=(0,\cdots,0)$ and $\beta_{s}=0$ for any $1\leq s\leq n$  such that $i_{s}=1$.

$(2)$ $l=(0,\cdots,0)$ and $\gamma_{t}=0$ for any $1\leq t\leq n$  such that $j_{t}=1$.

$(3)$ $n=\beta_{1}=\gamma_{1}=i_{1}=j_{1}=1$ and $k_{1}=l_{1}=0$.
\end{lemma}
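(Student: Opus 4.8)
The plan is to reduce the operator–boundedness question to the boundedness of a single scalar sequence, and then read off the classification from the asymptotics furnished by Lemma \ref{HanL1}. First I would expand the product symbols into monomials. Writing $f_\beta = \sum_{\mu\le k} c_\mu\, z^{\theta}\overline z^{\vartheta}$ and $g_\gamma=\sum_{\nu\le l} d_\nu\, z^{\varphi}\overline z^{\psi}$ with $\theta,\vartheta,\varphi,\psi$ as in Lemma \ref{ToeL2} and $c_\mu=\prod_s a^{(i_s)}_{\mu_s}$, $d_\nu=\prod_s b^{(j_s)}_{\nu_s}$, the linearity of $H_{\cdot}$ and antilinearity of $H^{*}_{\cdot}$ give $H^{*}_{f_\beta}H_{g_\gamma}=\sum_{\mu,\nu}\overline{c_\mu}\,d_\nu\,H^{*}_{z^{\theta}\overline z^{\vartheta}}H_{z^{\varphi}\overline z^{\psi}}$. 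The crucial structural point is that, by Lemma \ref{HanL1}, each summand sends $e_\alpha$ to a multiple of $e_{\alpha+\vartheta+\varphi-\theta-\psi}$, and since $\vartheta_s-\theta_s=\pm\beta_s$ and $\varphi_s-\psi_s=\pm\gamma_s$ do not depend on $\mu,\nu$, every summand produces one and the same shift $\sigma$. Hence $H^{*}_{f_\beta}H_{g_\gamma}e_\alpha=C_\alpha\, e_{\alpha+\sigma}$ for the scalar $C_\alpha=\sum_{\mu,\nu}\overline{c_\mu}d_\nu A^{\theta\vartheta\varphi\psi}_\alpha$, and because distinct $\alpha$ give orthogonal images, the product is bounded if and only if $\{C_\alpha\}_\alpha$ is a bounded sequence.

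For sufficiency I would dispose of (1) and (2) first: if $f_\beta$ (resp. $g_\gamma$) is holomorphic then $H_{f_\beta}=0$ (resp. $H_{g_\gamma}=0$) on polynomials, so the product vanishes; and one checks from the definitions that condition (1) (resp. (2)) is exactly the assertion that $f_\beta$ (resp. $g_\gamma$) is holomorphic, i.e. that every monomial $z^{\theta}\overline z^{\vartheta}$ appearing has $\vartheta=0$. For (3), with $n=1$ and $f_\beta,g_\gamma$ constant multiples of $\overline z$, Lemma \ref{HanL1} gives $A_\alpha\sim(\gamma\nu\alpha^{-1})\alpha^{(\beta+\nu+\gamma+\mu)/2}=\alpha^{-1}\cdot\alpha$, a bounded sequence.

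For necessity, assume $\{C_\alpha\}$ is bounded and that neither (1) nor (2) holds, so $f_\beta$ and $g_\gamma$ are both non-holomorphic; equivalently the top monomials satisfy $\vartheta^{(k)}\ne 0$ and $\psi^{(l)}\ne 0$, whence the top summand $(\mu,\nu)=(k,l)$ does not vanish. Since $\mu\le k$ and $\nu\le l$, this summand carries strictly the largest total growth, and by \eqref{(HanL101)} its contribution is $\overline{c_k}d_l\big(\sum_j\vartheta^{(k)}_j\psi^{(l)}_j\alpha_j^{-1}\big)\alpha^{k+l+(\beta+\gamma)/2}$. All coefficients $\vartheta^{(k)}_j\psi^{(l)}_j$ are nonnegative, so there is no cancellation among the monomials $\alpha^{\,k+l+(\beta+\gamma)/2-e_j}$ and none against the strictly lower-order terms; boundedness therefore forces each such monomial with nonzero coefficient to be bounded as $\alpha\to\infty$. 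Comparing exponents coordinatewise then forces every coordinate $s$ with $\vartheta^{(k)}_s\psi^{(l)}_s=0$ to be inert, a single active coordinate to carry all the antiholomorphic growth, and in that coordinate $k=l=0$, $i=j=1$, $\beta=\gamma=1$ — which is condition (3).

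The main obstacle will be this last asymptotic bookkeeping, and two points demand real care. First, one must justify that the $(\mu,\nu)=(k,l)$ summand genuinely dominates and that the lower-order summands cannot conspire to cancel its leading part; the nonnegativity of the $\vartheta^{(k)}_j\psi^{(l)}_j$ together with the strict ordering of total degrees is precisely what rescues this. Second, the asymptotic \eqref{(HanL101)} degenerates when $\vartheta^{(k)}$ and $\psi^{(l)}$ have disjoint supports, for then $\sum_j\vartheta^{(k)}_j\psi^{(l)}_j\alpha_j^{-1}\equiv 0$ and the leading term drops by one order; in that regime a finer expansion of $A_\alpha$ beyond \eqref{(HanL101)} is required to pin down the true growth rate. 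Controlling this degenerate-support case, and the final rigidity step that collapses "exactly one active coordinate" to the form recorded in (3), is where the essential difficulty of the argument lies.
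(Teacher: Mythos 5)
Your reduction is the same as the paper's: expand $f_\beta,g_\gamma$ into monomials, observe that every summand shifts $e_\alpha$ by the same multi-index so that $H^{*}_{f_\beta}H_{g_\gamma}e_\alpha=C_\alpha e_{\alpha+\sigma}$, conclude that boundedness is equivalent to boundedness of $\{C_\alpha\}$, and read off the growth of the dominant $(\mu,\nu)=(k,l)$ summand from Lemma \ref{HanL1}. The sufficiency half and this reduction are fine. The genuine gap is in your necessity argument, at the step ``a single active coordinate \dots and in that coordinate $k=l=0$, $i=j=1$, $\beta=\gamma=1$ --- which is condition $(3)$.'' Having exactly one active coordinate of that type is \emph{not} condition $(3)$ unless $n=1$, and nothing in the exponent comparison forces $n=1$. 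Concretely, take $n=2$ and $f_\beta=g_\gamma=\overline{z}_1$ (so $k=l=(0,0)$, $\beta=\gamma=(1,0)$, $i_1=j_1=1$, second factors equal to the constant $1$). The quantity your argument isolates as the leading term,
\begin{align*}
\Big(\sum_{s}\vartheta^{(k)}_s\psi^{(l)}_s\alpha_s^{-1}\Big)\,\alpha^{\frac{\beta+\gamma}{2}+k+l}=\alpha_1^{-1}\cdot\alpha_1=1,
\end{align*}
is bounded, and indeed $H^{*}_{\overline{z}_1}H_{\overline{z}_1}$ is a bounded diagonal operator on $F^{2,m}(\mathbb{C}^2)$; yet $(1)$ and $(2)$ fail (because $\beta_1=1$ with $i_1=1$, resp.\ $\gamma_1=1$ with $j_1=1$) and $(3)$ fails because $n=2$. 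So the implication you assert cannot be established: the exponent bookkeeping certifies boundedness in cases the lemma does not list. The degenerate-support case you flag but defer behaves the same way: for $f_\beta=\overline{z}_1$, $g_\gamma=\overline{z}_2$ the leading term of $B_\alpha$ cancels and a direct computation gives $A_\alpha=O\big(\sqrt{\alpha_1\alpha_2}/|\alpha|^2\big)$, again bounded and again outside $(1)$--$(3)$.

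For what it is worth, the paper's own proof makes the identical leap: it asserts that ``$(k_s+\chi_{\{1\}}(i_s)\beta_s)(l_s+\chi_{\{1\}}(j_s)\gamma_s)\alpha_s^{-1}\alpha^{\frac{\beta+\gamma}{2}+k+l}$ independent of $\alpha$ for each $s$'' is equivalent to its conditions $(a)$ or $(b)$, and that $(a)$ is equivalent to $(1)$ or $(2)$; both equivalences fail for $n\ge2$ on the examples above. So your sketch faithfully reproduces the paper's route, including its weak point, and the two issues you yourself single out (the final ``rigidity'' step and the degenerate supports) are not merely technical: they are exactly where the claimed classification breaks for $n\ge 2$. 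Your argument does close for $n=1$, where a single coordinate is forced and the active-coordinate analysis literally is condition $(3)$; for $n\ge2$ a correct necessity proof must admit additional bounded cases rather than derive $(3)$.
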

\begin{proof}
To begin with, we use the same notations $\theta$, $\vartheta$, $\varphi$ and $\psi$ as in Lemma \ref{ToeL2}. Then
by Lemma \ref{HanL1}, for any $\alpha\in\mathbb{N}^{n}$ satisfying $\alpha \geq\beta+\gamma$,
\begin{align*}
&\hs\hs H^{*}_{f_{\beta}}H_{g_{\gamma}}e_{\alpha}\\
&=\sum_{\mu_{1}\leq k_{1}}\cdots\sum_{\mu_{n}\leq k_{n}}
\sum_{\nu_{1}\leq l_{1}}\cdots\sum_{\nu_{n}\leq l_{n}}
a^{(i_{1})}_{\mu_{1}}\cdots a^{(i_{n})}_{\mu_{n}}
b^{(j_{1})}_{\nu_{1}}\cdots b^{(j_{n})}_{\nu_{n}}
H^{*}_{z^{\theta}\bar{z}^{\vartheta}}
H_{z^{\varphi}\bar{z}^{\psi}}e_{\alpha}\\
&=\sum_{\mu_{1}\leq k_{1}}\cdots\sum_{\mu_{n}\leq k_{n}}
\sum_{\nu_{1}\leq l_{1}}\cdots\sum_{\nu_{n}\leq l_{n}}
a^{(i_{1})}_{\mu_{1}}\cdots a^{(i_{n})}_{\mu_{n}}b^{(j_{1})}_{\nu_{1}}\cdots b^{(j_{n})}_{\nu_{n}}
B^{\theta\vartheta\varphi\psi}_{\alpha}
e_{\alpha+\vartheta+\varphi-\theta-\psi},
\end{align*}
where
\begin{align*}
\nonumber B^{\theta\vartheta\varphi\psi}_{\alpha}
&:=\bigg(\frac{(\alpha+\vartheta+\varphi)!(m+n-1+|\alpha+\vartheta+\varphi|)!}
{\alpha !(n-1+|\alpha+\vartheta+\varphi|)! }-\frac{(\alpha+\varphi)!(\alpha+\vartheta+\varphi-\psi)!}
{\alpha !(\alpha+\varphi- \psi) ! }\\
\nonumber&\hs\hs\times\frac{(m+n-1+|\alpha+\varphi|)!(n-1+|\alpha +\varphi- \psi|) !(m+n-1+|\alpha+\vartheta+\varphi-\psi|)!}
{(n-1+|\alpha+\varphi|)!(m+n-1+|\alpha +\varphi- \psi|) !(n-1+|\alpha+\vartheta+\varphi-\psi|)! }\bigg)\\
&\hs\hs\times\sqrt{\frac{\alpha !(n-1+|\alpha|) !(n-1+|\alpha+\vartheta+\varphi- \theta - \psi|) !}{(\alpha+\vartheta+\varphi- \theta - \psi) !(m+n-1+|\alpha|) !(m+n-1+|\alpha+\vartheta+\varphi- \theta - \psi|) !}}.
\end{align*}
If $\vartheta\neq0$ and $\psi\neq0$, then by Lemma \ref{HanL1} again, we have
$B^{\theta\vartheta\varphi\psi}_{\alpha}\neq0$ and
\begin{align*}
B^{\theta\vartheta\varphi\psi}_{\alpha}
&\sim\sum_{s=1}^{n}\vartheta_{s}\psi_{s}\alpha_{s}^{-1}
\alpha^{\frac{\theta+\vartheta+\varphi+\psi}{2}}\\
&=\sum_{s=1}^{n}\left(\mu_{s}+\chi_{\{1\}}(i_{s})\beta_{s}\right)
\left(\nu_{s}+\chi_{\{1\}}(j_{s})\gamma_{s}
\right)\alpha_{s}^{-1}
\alpha^{\frac{\beta+\gamma}{2}+\mu+\nu}.
\end{align*}
Since $a_{k_{i}}$, $b_{l_{i}}$ are nonzero constants for each $i=1,\cdots,n$, we have
\begin{align*}
&\left\|H^{*}_{f_{\beta}}H_{g_{\gamma}}e_{\alpha}\right\|_{2, m}\\
=&\left|\sum_{\mu_{1}\leq k_{1}}\cdots\sum_{\mu_{n}\leq k_{n}}
\sum_{\nu_{1}\leq l_{1}}\cdots\sum_{\nu_{n}\leq l_{n}}
a^{(i_{1})}_{\mu_{1}}\cdots a^{(i_{n})}_{\mu_{n}}b^{(j_{1})}_{\nu_{1}}\cdots b^{(j_{n})}_{\nu_{n}}
B^{\theta\vartheta\varphi\psi}_{\alpha}\right|\\
\sim&\left|a^{(i_{1})}_{k_{1}}\cdots a^{(i_{n})}_{k_{n}}b^{(j_{1})}_{l_{1}}\cdots b^{(j_{n})}_{l_{n}}\right|
\sum_{s=1}^{n}\left(k_{s}+\chi_{\{1\}}(i_{s})\beta_{s}\right)
\left(l_{s}+\chi_{\{1\}}(j_{s})\gamma_{s}\right)
\alpha_{s}^{-1}\alpha^{\frac{\beta+\gamma}{2}+k+l}
\end{align*}
for $\alpha\geq\beta+\gamma$. Since the Hankel product $H^{*}_{f_{\beta}}H_{g_{\gamma}}$ is bounded on $F^{2,m}$  if and only if the sequence
$$\left\{\left\|H^{*}_{f_{\beta}}H_{g_{\gamma}}e_{\alpha}\right\|_{2, m}\right\}_{\alpha\geq\beta+\gamma}$$
is bounded.
Therefore, $H^{*}_{f_{\beta}}H_{g_{\gamma}}$ is bounded on $F^{2,m}$ if and only if the following expression
\begin{align*}
\left(k_{s}+\chi_{\{1\}}(i_{s})\beta_{s}\right)
\left(l_{s}+\chi_{\{1\}}(j_{s})\gamma_{s}\right)
\alpha_{s}^{-1}\alpha^{\frac{\beta+\gamma}{2}+k+l}
\end{align*}
is independent of $\alpha$ for each $s=1,\cdots,n$, which is equivalent to that at least one of the following statements holds:

$(a)$ $\left(k_{s}+\chi_{\{1\}}(i_{s})\beta_{s}\right)
\left(l_{s}+\chi_{\{1\}}(j_{s})\gamma_{s}\right)=0$ for each $s=1,\cdots,n$.

$(b)$ $n=\beta_{1}=\gamma_{1}=i_{1}=j_{1}=1$ and $k_{1}=l_{1}=0$.\\
Since  $(a)$ is equivalent to condition $(1)$ or $(2)$, the desired result is then obtained.
\end{proof}

We proceed to prove the main theorem in this section.

\begin{proof}[\pmb{Proof of Theorem \ref{HP}}]
If the statement $(1)$ or $(2)$ is true, then $H^{*}_{f}=0$ or $H_{g}=0$, it follows that   $H^{*}_{f}H_{g}$ is bounded on $F^{2,m}$. If the statement $(3)$ is true, then we have
\begin{align*}
H^{*}_{f}H_{g}e_{\alpha}
&=\overline{a}bH^{*}_{\overline{z}}H_{\overline{z}}e_{\alpha}\\
&=\overline{a}be_{\alpha}
\end{align*}
by Lemma \ref{HanL2}, which implies that the Hankel product $H^{*}_{f}H_{g}$ is bounded on $F^{2,m}$.

Conversely, assume the Hankel product $H^{*}_{f}H_{g}$ is bounded on $F^{2,m}$. If neither $f$ nor $g$ is holomorphic, we are to show that the statement $(3)$ must be true. Since $f$ is a polynomial in $z,\overline{z}\in\mathbb{C}^{n}$, there exist $k=(k_{1},\cdots,k_{n})$ and
$l=(l_{1},\cdots,l_{n})\in\mathbb{N}^{n}$ such that
\begin{align}\label{fzz1}
f(z,\overline{z})=\prod_{s=1}^{n}\left(\sum_{\beta_{s}\leq k_{s}}\sum_{\gamma_{s}\leq l_{s}}a_{\beta_{s}\gamma_{s},s}z_{s}^{\beta_{s}}\overline{z}_{s}^{\gamma_{s}}\right).
\end{align}
Let
\begin{align*}
f_{1}(z,\overline{z})=\prod_{s=1}^{n}\left(\sum_{\beta_{s}\leq k_{s}}a_{\beta_{s}0,s}z_{s}^{\beta_{s}}\right).
\end{align*}
Then $f_{1}$ is said to be the pure holomorphic part of $f$. Similarly, denote $g_{1}$ by the pure holomorphic part of $g$. Let $f_{2} = f -f_{1}$ and $g_{2} = g -g_{1}$. Then by our assumption, we see that neither $f_{2}$ nor $g_{2}$ is 0. Moreover, from the discussion before Theorem \ref{TP},  $f_{2}$ and $g_{2}$ admit  expansions
\begin{align*}
f_{2}=\prod_{s=1}^{n}\left(\sum_{\theta_{s}=i_{0,s}}^{i_{1,s}}
F_{\theta_{s}}\right),\quad
g_{2}=\prod_{t=1}^{n}\left(\sum_{\tau_{t}=j_{0,t}}^{j_{1,t}}
G_{\tau_{t}}\right),
\end{align*}
where $F_{i_{0,s}}$, $F_{i_{1,s}}$, $G_{j_{0,s}}$ and $G_{j_{1,s}}$ are nonzero. Therefore,
\begin{align}
\nonumber H^{*}_{f}H_{g}e_{\alpha}&=H^{*}_{f_{2}}H_{g_{2}}e_{\alpha}\\
\nonumber&=\sum_{\theta_{1}=i_{0,1}}^{i_{1,1}}\cdots
\sum_{\theta_{n}=i_{0,n}}^{i_{1,n}}
\sum_{\tau_{1}=j_{0,1}}^{j_{1,1}}\cdots\sum_{\tau_{n}=j_{0,n}}^{j_{1,n}}
H^{*}_{F_{\theta_{1}}\cdots F_{\theta_{n}}}
H_{G_{\tau_{1}}\cdots G_{\tau_{n}}}e_{\alpha}\\
\label{(HP1)}&=H^{*}_{F_{i_{0,1}}\cdots F_{i_{0,n}}}
H_{G_{j_{1,1}}\cdots G_{j_{1,n}}}e_{\alpha}
+\sum_{\substack{(\theta_{1},\cdots,\theta_{n},\tau_{1},\cdots,\tau_{n})
\neq\\(i_{0,1},\cdots,i_{0,n},j_{1,1},\cdots,j_{1,n})}}
H^{*}_{F_{\theta_{1}}\cdots F_{\theta_{n}}}
H_{G_{\tau_{1}}\cdots G_{\tau_{n}}}e_{\alpha}
\end{align}
 for any $\alpha\in \mathbb{N}^{n}$.
Set multi-index
$$
\kappa=\left(\max \left\{\left|i_{0,1}\right|,\left|i_{1,1}\right|\right\}+\max \left\{\left|j_{0,1}\right|,\left|j_{1,1}\right|\right\}
,\cdots,\max \left\{\left|i_{0,n}\right|,\left|i_{1,n}\right|\right\}+\max \left\{\left|j_{0,n}\right|,\left|j_{1,n}\right|\right\}\right).
$$
It follows from the definitions of $F_{\theta_{s}}$, $G_{\tau_{t}}$ and the proof of Lemma \ref{HanL2} that for any $\alpha\geq\kappa$, $\beta=\left(\theta_{1},\cdots,\theta_{n}\right),\
\gamma=\left(\tau_{1},\cdots,\tau_{n}\right)$ with $i_{0,s} \leq \theta_{s} \leq i_{1,s}$ and $j_{0,t} \leq \tau_{t}\leq j_{1,t}$ ($s,t=1,\cdots,n$) satisfying $(\theta_{1},\cdots,\theta_{n},\tau_{1},\cdots,\tau_{n})
\neq(i_{1,1},\cdots,i_{1,n},j_{1,1},\cdots,j_{1,n})$, we have
$$
H^{*}_{F_{\theta_{1}}\cdots F_{\theta_{n}}}
H_{G_{\tau_{1}}\cdots G_{\tau_{n}}}e_{\alpha}\in \text{Span}\{e_{\alpha+\gamma-\beta}\}.
$$
But the first term of (\ref{(HP1)})
$$H^{*}_{F_{i_{0,1}}\cdots F_{i_{0,n}}}
H_{G_{j_{1,1}}\cdots G_{j_{1,n}}}e_{\alpha}\in \text{Span}\{e_{\alpha+\gamma^{\prime}-\beta^{\prime}}\},$$
where $\gamma^{\prime}=(i_{0,1},\cdots,i_{0,n})$ and $\beta^{\prime}=(j_{1,1},\cdots,j_{1,n})$.
Therefore,  we conclude that $H^{*}_{F_{i_{0,1}}\cdots F_{i_{0,n}}}
H_{G_{j_{1,1}}\cdots G_{j_{1,n}}}e_{\alpha}$ is orthogonal to the second term of (\ref{(HP1)}) for $\alpha \geq\kappa$.
This makes
\begin{align*}
\left\|H^{*}_{f}H_{g}e_{\alpha}\right\|_{2,m}
\geq\left\|H^{*}_{F_{i_{0,1}}\cdots F_{i_{0,n}}}
H_{G_{j_{1,1}}\cdots G_{j_{1,n}}}e_{\alpha}\right\|_{2,m}
\end{align*}
for $\alpha \geq\kappa$.
Carefully examining the proof of Lemma \ref{HanL2}, we see that  $H^{*}_{F_{i_{1,1}}\cdots F_{i_{1,n}}}
H_{G_{j_{1,1}}\cdots G_{j_{1,n}}}e_{\alpha}$ is bounded on $F^{2,m}$
if and only if the sequence
$$\left\{\left\|H^{*}_{F_{i_{0,1}}\cdots F_{i_{0,n}}}
H_{G_{j_{1,1}}\cdots G_{j_{1,n}}}e_{\alpha}\right\|_{2,m}\right\}_{\alpha\geq\kappa}$$
is bounded on $F^{2,m}$.

Notice from the definitions of $f_{2}$ and $g_{2}$ that, for $\theta_{s} \geq 0$ (resp. $\tau_{t} \geq 0$), $F_{\theta_{s}}$ (resp. $G_{\tau_{t}}$) doesn't contain any term as $a_{\theta_{s}}z_{s}^{\theta_{s}}$ (resp. $b_{\tau_{t}}z_{t}^{\tau_{t}}$),
where $a_{\theta_{s}}$ (resp.$b_{\tau_{t}}$) denotes the coefficient. In other words, for $\theta_{s} \geq 0$ (resp.$\tau_{t} \geq 0$), the term $F_{\theta_{s}}$  (resp. $G_{\tau_{t}}$) is of the following form:
\begin{align}\label{(HP3)}
\sum_{1\leq\mu_{s}\leq k_{s}}
a_{\mu_{s},s}z_{s}^{\mu_{s}+\theta_{s}}\overline{z}_{s}^{\mu_{s}},\quad
\left(\text{resp}.\sum_{1\leq\nu_{t}\leq l_{t}}
b_{\nu_{t},t}z_{t}^{\nu_{t}+\tau_{t}}\overline{z}_{t}^{\nu_{t}},\right)
\end{align}
where $k_{s}$ and $l_{t}$ are positive integers greater than or equal to 1.

For $\theta_{s} < 0$ (resp.$\tau_{t}< 0$), the term $F_{\theta_{s}}$  (resp. $G_{\tau_{t}}$)  is of the following form:
\begin{align}\label{(HP4)}
\sum_{\mu_{s}\leq k_{s}}
a_{\mu_{s},s}z_{s}^{\mu_{s}}\overline{z}_{s}^{\mu_{s}+|\theta_{s}|},\quad
\left(\text{resp}.\sum_{\nu_{t}\leq l_{t}}
b_{\nu_{t},t}z_{t}^{\nu_{t}}\overline{z}_{t}^{\nu_{t}+|\gamma_{t}|},\right)
\end{align}

If $i_{0,s}\geq0$ or $j_{1,t}\geq0$ for all $s,t=1,\cdots,n$, then it follows   from \eqref{(HP3)} and Lemma \ref{HanL2} that $H^{*}_{F_{i_{0,1}}\cdots F_{i_{0,n}}}H_{G_{j_{1,1}}\cdots G_{j_{1,n}}}$ is unbounded. Thus, the boundedness of $H^{*}_{F_{i_{0,1}}\cdots F_{i_{0,n}}}
H_{G_{j_{1,1}}\cdots G_{j_{1,n}}}$  implies that $i_{0,s}<0$ and $j_{1,t}<0$ for all $s,t=1,\cdots,n$. Then $F_{\theta_{s}}$ is the form of \eqref{(HP4)}. It follows from $(3)$ of Lemma \ref{HanL2}, we have $n=1$ and $F_{i_{0,1}} = a_{0}\overline{z}$, $G_{j_{1,1}}=b_{0}\overline{z}$, where are $a_{0}$, $b_{0}$ nonzero constants and $\overline{z}\in\mathbb{C}$.

As discussed above, we can also conclude that the Hankel product is  bounded if (\ref{(HP1)}) is replaced  by
\begin{align*}
H^{*}_{f}H_{g}e_{\alpha}
=H^{*}_{F_{i_{1,1}}\cdots F_{i_{1,n}}}
H_{G_{j_{0,1}}\cdots G_{j_{0,n}}}e_{\alpha}
+\sum_{\substack{(\theta_{1},\cdots,\theta_{n},\tau_{1},\cdots,\tau_{n})
\neq\\(i_{1,1},\cdots,i_{1,n},j_{0,1},\cdots,j_{0,n})}}
H^{*}_{F_{\theta_{1}}\cdots F_{\theta_{n}}}
H_{G_{\tau_{1}}\cdots G_{\tau_{n}}}e_{\alpha}.
\end{align*}
Similar to the discussion of $H^{*}_{F_{i_{0,1}}\cdots F_{i_{0,n}}}
H_{G_{j_{1,1}}\cdots G_{j_{1,n}}}e_{\alpha}$, we can also conclude that  $n=1$ and $F_{i_{1,1}} = a^{\prime}_{0}\overline{z}$, $G_{j_{0,1}}=b^{\prime}_{0}\overline{z}$, where $a^{\prime}_{0}$, $b^{\prime}_{0}$ are nonzero constants and $\overline{z}\in\mathbb{C}$.
Therefore, $f_{2}(z)= a\overline{z}$ and $g_{2}(z)=b\overline{z}$, where  $a$ and $b$ are nonzero constants  and $\overline{z}\in\mathbb{C}$, hence the statement $(3)$ is true. This completes the proof.
\end{proof}

\begin{corollary}
Assume that $f$ is a polynomial in $z,\overline{z}\in\mathbb{C}^{n}$. Then the Hankel operator
$H_{f}$ is bounded on $F^{2,m}$ if and only if one of the following statements is true

$(1)$  $f$ is holomorphic.

$(2)$ $n=1$ and there exists a holomorphic polynomial $f_{1}$ such that
\begin{align*}
f=f_{1}+a\overline{z},
\end{align*}
where $a$ is a constant and $z\in\mathbb{C}$.
\end{corollary}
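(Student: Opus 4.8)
The plan is to reduce the corollary directly to Theorem \ref{HP} by taking the second symbol equal to the first. The bridge is the standard observation that a densely defined operator $H_f$ extends to a bounded operator on $F^{2,m}$ if and only if $H_f^* H_f$ does. Indeed, since polynomials are dense in $F^{2,m}$ and each $e_\alpha$ is a polynomial, for any finite linear combination $h$ of the basis elements one has the identity $\|H_f h\|_{2,m}^2 = \langle H_f^* H_f h, h\rangle_m$. If $H_f^* H_f$ is bounded with norm $M$, this gives $\|H_f h\|_{2,m}^2 \le M\|h\|_{2,m}^2$, so $H_f$ is bounded; conversely, if $H_f$ is bounded then so is its adjoint $H_f^*$, whence $H_f^* H_f$ is bounded as a composition of bounded operators.

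With this equivalence in hand, I would simply invoke Theorem \ref{HP} with the second symbol $g$ set equal to $f$. The three alternatives in that theorem then collapse: alternatives $(1)$ and $(2)$ both read \emph{``$f$ is holomorphic,''} while alternative $(3)$ --- which demands $n=1$ together with $f = f_{1}+a\overline{z}$ and $g = g_{1}+b\overline{z}$ --- becomes, upon identifying $g$ with $f$, exactly \emph{``$n=1$ and there is a holomorphic polynomial $f_{1}$ with $f = f_{1}+a\overline{z}$.''} These are precisely statements $(1)$ and $(2)$ of the corollary (with the harmless overlap that the choice $a=0$ in $(2)$ falls back into $(1)$). Both implications are thereby obtained at once.

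I do not anticipate a serious obstacle here: essentially all of the content is already carried by Theorem \ref{HP}, and the only point requiring a line of justification is the reduction \emph{``$H_f$ bounded $\iff$ $H_f^* H_f$ bounded,''} which is routine Hilbert-space operator theory once one observes that the relevant operators act on the dense subspace spanned by $\{e_{\alpha}\}$ and that Lemma \ref{HanL1} renders their action on this subspace completely explicit. Thus the entire argument amounts to the single substitution $g=f$ in Theorem \ref{HP}.
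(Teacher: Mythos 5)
Your proposal is correct and follows essentially the same route as the paper, which likewise obtains the corollary by substituting $g=f$ into Theorem \ref{HP}; your extra paragraph justifying the equivalence ``$H_f$ bounded $\iff H_f^*H_f$ bounded'' via $\|H_f h\|_{2,m}^2=\langle H_f^*H_f h,h\rangle_m$ simply makes explicit a standard step the paper leaves unstated.
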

\begin{proof}
It is a direct consequence of Theorem \ref{HP} by setting $g = f$.
\end{proof}
\begin{corollary}
Assume that $f$ is a polynomial in $z,\overline{z}\in\mathbb{C}^{n}$.  Then the Hankel operator $H_{f}$ is compact on $F^{2,m}$ if and only if $f$ is holomorphic.
\end{corollary}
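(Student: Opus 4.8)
The plan is to prove both implications, using the preceding corollary (the boundedness characterization of $H_{f}$) to reduce the compactness question to a single concrete operator, and then to exploit the asymptotics already established in Lemma \ref{HanL1}.

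First the easy direction. If $f$ is holomorphic, then for every basis element $e_{\alpha}$ the product $fe_{\alpha}$ is again a holomorphic polynomial, hence lies in $F^{2,m}$, so $P_{m}(fe_{\alpha})=fe_{\alpha}$ and $H_{f}e_{\alpha}=(I-P_{m})(fe_{\alpha})=0$. By linearity $H_{f}$ vanishes on the dense span of $\{e_{\alpha}\}$, and being bounded it must be the zero operator, which is trivially compact. This same computation shows $H_{f_{1}}=0$ for any holomorphic polynomial $f_{1}$, a fact I will reuse below.

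For the converse, suppose $H_{f}$ is compact; in particular it is bounded, so by the boundedness corollary either $f$ is holomorphic (and we are done) or $n=1$ and $f=f_{1}+a\overline{z}$ with $f_{1}$ a holomorphic polynomial and $a$ a constant. It remains to rule out $a\neq 0$. Since the Hankel operator is linear in its symbol and $H_{f_{1}}=0$, we have $H_{f}=aH_{\overline{z}}$, so it suffices to show that $H_{\overline{z}}$ is \emph{not} compact on $F^{2,m}$ when $n=1$. Here I would invoke the standard fact that a compact operator sends weakly null sequences to norm-null sequences. The orthonormal basis satisfies $e_{\alpha}\to 0$ weakly as $\alpha\to\infty$, since for fixed $h$ one has $\sum_{\alpha}|\langle h,e_{\alpha}\rangle_{m}|^{2}=\|h\|_{2,m}^{2}<\infty$, forcing the individual terms to vanish. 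Thus, were $H_{\overline{z}}$ compact, we would need $\|H_{\overline{z}}e_{\alpha}\|_{2,m}\to 0$. But Lemma \ref{HanL1}, applied with $\beta=\mu=0$ and $\gamma=\nu=1$, gives $H_{\overline{z}}^{*}H_{\overline{z}}e_{\alpha}=A_{\alpha}e_{\alpha}$, whence
$$\|H_{\overline{z}}e_{\alpha}\|_{2,m}^{2}=\langle H_{\overline{z}}^{*}H_{\overline{z}}e_{\alpha},e_{\alpha}\rangle_{m}=A_{\alpha},$$
and the asymptotic \eqref{(HanL101)} yields $A_{\alpha}\sim\alpha^{-1}\alpha^{(0+1+1+0)/2}=1$, i.e. $A_{\alpha}$ tends to a strictly positive constant. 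Hence $\|H_{\overline{z}}e_{\alpha}\|_{2,m}\not\to 0$, contradicting compactness; therefore $a=0$ and $f$ is holomorphic.

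The remaining work is bookkeeping, and I do not anticipate a serious obstacle since both the boundedness classification and the asymptotic formula are already available. The two points needing care are the reduction $H_{f}=aH_{\overline{z}}$, which hinges on $H_{f_{1}}=0$ for the holomorphic part, and the correct specialization of the indices in Lemma \ref{HanL1} so that the exponent $(\beta+\nu+\gamma+\mu)/2$ exactly cancels the $\alpha^{-1}$ factor, leaving $A_{\alpha}$ bounded away from zero rather than decaying.
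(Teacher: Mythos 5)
Your argument is correct and complete; note that the paper itself states this corollary without any proof, so there is nothing to match it against line by line. Both directions are handled properly: $H_{f_{1}}=0$ for holomorphic polynomial symbols gives the easy implication and the reduction $H_{f}=aH_{\overline{z}}$, and the non-compactness of $H_{\overline{z}}$ via weakly null $e_{\alpha}$ with $\|H_{\overline{z}}e_{\alpha}\|_{2,m}^{2}=A_{\alpha}$ bounded away from zero is exactly the right obstruction. Your specialization of Lemma \ref{HanL1} is also correct: with $\beta=\mu=0$, $\gamma=\nu=1$, $n=1$, the prefactor $\gamma_{1}\nu_{1}\alpha_{1}^{-1}$ cancels $\alpha^{(\beta+\nu+\gamma+\mu)/2}=\alpha$, so $A_{\alpha}$ tends to a positive constant. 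One small simplification worth knowing: the paper's proof of Theorem \ref{HP} already records the exact identity $H^{*}_{\overline{z}}H_{\overline{z}}e_{\alpha}=e_{\alpha}$ (i.e.\ $A_{\alpha}=1$ exactly, which one can verify directly from \eqref{(HanL11)}), so you could cite that and avoid the asymptotic formula \eqref{(HanL101)} altogether; the constancy of $\|H_{\overline{z}}e_{\alpha}\|_{2,m}$ then makes the failure of compactness immediate. Either way, your reasoning stands.
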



\begin{thebibliography}{30}
\vspace{-0.3cm}
\bibitem{AlemanPottReguera2017} Aleman, A., Pott, S., Reguera, C.: Sarason conjecture on the Bergman space. Int. Math. Res. Not. IMRN, (14), 4320-4349 (2017)


\vspace{-0.3cm}
\bibitem{BommierYoussfiZhu2017} Bommier-Hato, H., Youssfi, E.H., Zhu, K.H.:Sarason's Toeplitz product problem for a class of Fock spaces. Bull. Sci. Math. \textbf{141}, 408-442 (2017)

\vspace{-0.3cm}
\bibitem{ChoParkZhu2014} Cho, H.R., Park, J.-D., Zhu, K. H.: Products of Toeplitz operators on the Fock space. Proc. Am. Math. Soc., \textbf{142}, 2483-2489 (2014)

\vspace{-0.3cm}
\bibitem{ChoZhu2012} Cho, H. R., Zhu, K. H.: Fock-Sobolev spaces and their Carleson measures. J. Funct. Anal., \textbf{263}(8), 2483-2506 (2012)

\vspace{-0.3cm}
\bibitem{MaYanZhengZhu2019}  Ma, P., Yan, F. G., Zheng, D. C., Zhu, K. H.: Products of Hankel operators on the Fock spaces. J. Funct. Anal., \textbf{277}, 2644-2663 (2019)

\vspace{-0.3cm}
\bibitem{Nazarov1997} Nazarov, F.: A counterexample to Sarason's conjecture (1997) (preprint)

\vspace{-0.3cm}
\bibitem{Park2006} Park, J.-D.: Bounded Toeplitz products on the Bergman space of the unit ball in $\mathbb{C}^{n}$, Integral Equations Operator Theory, \textbf{54}(4), 571-584 (2006)

\vspace{-0.3cm}
\bibitem{Sarason1994} Sarason, D.: Products of Toeplitz operators. In: Havin, V.P., Nikolski, N.K.(eds.) Linear and Complex Analysis Problem Book 3. Part I, Lecture Notes in Math, vol. 1573, pp. 318-319. Springer, Berlin (1994)

\vspace{-0.3cm}
\bibitem{StroethoffZheng1999} Stroethoff, K., Zheng, D. C.: Products of Hankel and Toeplitz operators on the Bergman space. J. Funct. Anal., \textbf{169}(1), 289-313 (1999)

\vspace{-0.3cm}
\bibitem{StroethoffZheng2003} Stroethoff, K., Zheng, D. C.: Bounded Toeplitz products on the Bergman space of the polydisk. J. Math. Anal. Appl., \textbf{278}(1), 125-135 (2003)

\vspace{-0.3cm}
\bibitem{StroethoffZheng2007} Stroethoff, K., Zheng, D. C.:
Bounded Toeplitz products on Bergman spaces of the unit ball.
J. Math. Anal. Appl., \textbf{325}(1), 114-129 (2007)

\vspace{-0.3cm}
\bibitem{YanZheng2020} Yan, F. G., Zheng, D. C.: Products of Toeplitz and Hankel operators on Fock spaces. Integral Equations Operator Theory, \textbf{92}(3), (2020)


\vspace{-0.3cm}
\bibitem{Zheng1996} Zheng, D. C.: The distribution function inequality and products of Toeplitz operators and Hankel operators. J. Funct. Anal., \textbf{138}(2), 477-501 (1996)


\vspace{-0.3cm}
\bibitem{Zhu2007} Zhu, K. H.: Space of Holomorphic Functions in the Unit Ball, Springer-Verlag, New York, 2005, Society, (2007)

\end{thebibliography}
\end{document}